\newcommand{\lyxaddress}[1]{
\par {\raggedright #1
\vspace{1.4em}
\noindent\par}
}
\theoremstyle{plain}
\newtheorem{theorem}{Theorem}
  \theoremstyle{definition}
  \theoremstyle{remark}
  \newtheorem{remark}[theorem]{Remark}
  \theoremstyle{plain}
  \newtheorem{proposition}[theorem]{Proposition}
  \theoremstyle{plain}
  \newtheorem{lemma}[theorem]{Lemma}
  \theoremstyle{plain}
  \newtheorem{corollary}[theorem]{Corollary}
  \theoremstyle{definition}
  \theoremstyle{remark}
  \newtheorem*{remark*}{Remark}
  \theoremstyle{definition}
\newtheorem*{question*}{\it{QUESTION}}
\newcommand{\der}{\mathop\mathrm{der}\nolimits}
\newcommand{\spec}{\mathop\mathrm{spec}\nolimits}
\renewcommand{\Re}{\mathop\mathrm{Re}\nolimits}
\newcommand{\supp}{\mathop\mathrm{supp}\nolimits}
\newcommand{\Tr}{\mathop\mathrm{Tr}\nolimits}
\newcommand{\Res}{\mathop\mathrm{Res}\nolimits}
\begin{document}

\title{Orthogonal polynomials associated with Coulomb wave functions}

\author{F.~\v{S}tampach$^{1}$, P.~\v{S}\v{t}ov\'\i\v{c}ek$^{2}$}

\date{{}}

\maketitle

\lyxaddress{$^{1}$Department of Applied Mathematics, Faculty of Information
Technology, Czech Technical University in~Prague, Kolejn\'\i~2,
160~00 Praha, Czech Republic}

\lyxaddress{$^{2}$Department of Mathematics, Faculty of Nuclear Science, Czech
Technical University in Prague, Trojanova 13, 12000 Praha, Czech Republic}

\begin{abstract}
  \noindent A class of orthogonal polynomials associated with Coulomb
  wave functions is introduced. These polynomials play a role
  analogous to that the Lommel polynomials do in the theory of Bessel
  functions. The measure of orthogonality for this new class is
  described explicitly. In addition, the orthogonality measure problem
  is also discussed on a more general level. Apart of this, various
  identities derived for the new orthogonal polynomials may be viewed
  as generalizations of some formulas known from the theory of Bessel
  functions. A key role in these derivations is played by a Jacobi
  (tridiagonal) matrix $J_{L}$ whose eigenvalues coincide with
  reciprocal values of the zeros of the regular Coulomb wave function
  $F_{L}(\eta,\rho)$. The spectral zeta function corresponding to the
  regular Coulomb wave function or, more precisely, to the respective
  tridiagonal matrix is studied as well.
\end{abstract}
\vskip\baselineskip\noindent
\emph{Keywords}: orthogonal polynomials, measure of orthogonality,
Lommel polynomials, spectral zeta function

\vskip0.5\baselineskip\noindent\emph{2010 Mathematical Subject
Classification}: 33C47, 33E15, 11B37

\section{Introduction}

In \cite{Ikebe}, Ikebe showed the zeros of the regular Coulomb wave
function $F_{L}(\eta,\rho)$ and its derivative
$\partial_{\rho}F_{L}(\eta,\rho)$ (regarded as functions of $\rho$) to
be related to eigenvalues of certain compact Jacobi matrices (see
\cite[Chp. 14]{AbramowitzStegun} and references therein for basic
information about the Coulomb wave functions). He applied an approach
originally suggested by Grad and Zakraj\v{s}ek for Bessel functions
\cite{GardZakrajsek}. In more detail, reciprocal values of the nonzero
roots of $F_{L}(\eta,\rho)$ coincide with the nonzero eigenvalues of
the Jacobi matrix
\begin{equation}
  J_{L}=\begin{pmatrix}\lambda_{L+1} & w_{L+1}\\
    w_{L+1} & \lambda_{L+2} & w_{L+2}\\
    & w_{L+2} & \lambda_{L+3} & w_{L+3}\\
    &  & \ddots & \ddots & \ddots
  \end{pmatrix}\label{eq:Jacobi_mat_L}
\end{equation}
where
\begin{equation}
  w_{n} = \frac{\sqrt{(n+1)^{2}+\eta^{2}}}{(n+1)\sqrt{(2n+1)(2n+3)}}
  \quad\mbox{ and }\quad\lambda_{n}=-\frac{\eta}{n(n+1)}
  \label{eq:lambda_w_coulomb}
\end{equation}
for $n=L,L+1,L+2,\dots$. Similarly, reciprocal values of the nonzero
roots of $\partial_{\rho}F_{L}(\eta,\rho)$ coincide with the nonzero
eigenvalues of the Jacobi matrix
\begin{equation}
  \tilde{J}_{L}=\begin{pmatrix}\tilde{\lambda}_{L} & \tilde{w}_{L}\\
    \tilde{w}_{L} & \lambda_{L+1} & w_{L+1}\\
    & w_{L+1} & \lambda_{L+2} & w_{L+2}\\
    &  & \ddots & \ddots & \ddots
  \end{pmatrix}\label{eq:Jacobi_mat_L_tilde}
\end{equation}
where
\begin{equation}
  \tilde{w}_{L} = \sqrt{\frac{2L+1}{L+1}}\, w_{L}
  \quad\mbox{ and }\quad\tilde{\lambda}_{L}
  = -\frac{\eta}{(L+1)^{2}}.
  \label{eq:lambda_w_coulomb_tilde}
\end{equation}
The parameters have been chosen so that $L\in\mathbb{Z}_{+}$
(non-negative integers) and $\eta\in\mathbb{R}$. This is, however,
unnecessarily restrictive and one may extend the set of admissible
values of $L$.  Note also that $J_{L}$ and $\tilde{J}_{L}$ are both
compact, even Hilbert-Schmidt operators on $\ell^{2}(\mathbb{N})$.

Ikebe uses this observation for evaluating the zeros of
$F_{L}(\eta,\rho)$ and $\partial_{\rho}F_{L}(\eta,\rho)$ approximately
by computing eigenvalues of the respective finite truncated Jacobi
matrices. In this paper, we are going to work with the Jacobi matrices
$J_{L}$ and $\tilde{J}_{L}$ as well but pursuing a fully different
goal.  We aim to establish a new class of orthogonal polynomials
(shortly OPs) associated with Coulomb wave functions and to analyze
their properties.  Doing so, we intensively use a formalism which has
been introduced in \cite{StampachStovicek11} and further developed in
\cite{StampachStovicek13a}.  The studied polynomials represent a
two-parameter family generalizing the well known Lommel polynomials
associated with Bessel functions.  Let us also note that another
generalization of Lommel polynomials, in a completely different
direction, has been pointed out by Ismail in \cite{Ismail}, see also
\cite{KoelinkSwarttouw,Koelink99}.

Our primary intention in the study of the new class of OPs was to get
the corresponding orthogonality relation. Before approaching this task
we discuss the problem of finding the measure of orthogonality for a
sequence of OPs on a more general level. In particular, we consider
the situation when a sequence of OPs is determined by a three-term
recurrence whose coefficients satisfy certain convergence condition.
Apart of solving the orthogonality measure problem, various identities
are derived for the new class of OPs which may be viewed as
generalizations of some formulas well known from the theory of Bessel
functions. Finally, the last section is devoted to a study of the
spectral zeta functions corresponding to the regular Coulomb wave
functions or, more precisely, to the respective tridiagonal matrices.
In particular, we derive recursive formulas for values of the zeta
functions. Let us remark that this result can be used to localize the
smallest in modulus zero of $F_{L}(\eta,\rho)$, and hence the spectral
radius of the Jacobi matrix $J_{L}$.

\section{Preliminaries and some useful identities}

\subsection{The function $\mathfrak{F}$}

To have the paper self-contained we first briefly summarize some
information concerning the formalism originally introduced in
\cite{StampachStovicek11} and \cite{StampachStovicek13a} which will be
needed further. Our approach is based on employing a function
$\mathfrak{F}$ defined on the space of complex sequences. By
definition, $\mathfrak{F}:D\rightarrow\mathbb{C}$,
\[
\mathfrak{F}(x)=1+\sum_{m=1}^{\infty}(-1)^{m}
\sum_{k_{1}=1}^{\infty}\,\sum_{k_{2}
  =k_{1}+2}^{\infty}\,\dots\,\sum_{k_{m}=k_{m-1}+2}^{\infty}\, 
x_{k_{1}}x_{k_{1}+1}x_{k_{2}}x_{k_{2}+1}\dots x_{k_{m}}x_{k_{m}+1},
\]
where
\begin{equation}
  D = \left\{ \{x_{k}\}_{k=1}^{\infty}\subset\mathbb{C};\,
    \sum_{k=1}^{\infty}|x_{k}x_{k+1}|<\infty\right\} \!.
  \label{eq:def_D}
\end{equation}
For $x\in D$ one has the estimate
\begin{equation}
  \left|\mathfrak{F}(x)\right|
  \leq \exp\!\left(\sum_{k=1}^{\infty}|x_{k}x_{k+1}|\right)\!.
  \label{eq:F_ineq_exp}
\end{equation}
We identify $\mathfrak{F}(x_{1},x_{2},\dots,x_{n})$ with
$\mathfrak{F}(x)$ where $x=(x_{1},x_{2},\dots,x_{n},0,0,0,\dots)$, and
put $\mathfrak{F}(\emptyset)=1$ where $\emptyset$ stands for the empty
sequence.

Further we list from \cite{StampachStovicek11,StampachStovicek13a}
several useful properties of $\mathfrak{F}$. First,
\begin{equation}
  \mathfrak{F}(x) = \mathfrak{F}(x_{1},\dots,x_{k})\,
  \mathfrak{F}(T^{k}x)-\mathfrak{F}(x_{1},\dots,x_{k-1})x_{k}x_{k+1}\,
  \mathfrak{F}(T^{k+1}x),\quad k=1,2,\dots,
  \label{eq:F_T_recur_k}
\end{equation}
where $x\in D$ and $T$ denotes the shift operator from the left, i.e.
$(Tx)_{k}=x_{k+1}$. In particular, for $k=1$ one gets the rule
\begin{equation}
  \mathfrak{F}(x) = \mathfrak{F}(Tx)-x_{1}x_{2}\,\mathfrak{F}(T^{2}x).
  \label{eq:F_T_recur}
\end{equation}
Second, for $x\in D$ one has
\begin{equation}
  \lim_{n\rightarrow\infty}\mathfrak{F}(T^{n}x) = 1,
  \ \ \lim_{n\rightarrow\infty}\mathfrak{F}(x_{1},x_{2},\dots,x_{n})
  = \mathfrak{F}(x).
  \label{eq:lim_F_T_n}
\end{equation}
Third, one has (see \cite[Subsection 2.3]{StampachStovicek13a})
\begin{eqnarray}
  &  & \mathfrak{F}(x_{1},x_{2},\dots,x_{d})
  \mathfrak{F}(x_{2},x_{3},\dots,x_{d+s})
  -\mathfrak{F}(x_{1},x_{2},\dots,x_{d+s})
  \mathfrak{F}(x_{2},x_{3},\dots,x_{d})\nonumber \\
  &  & =\left(\prod_{j=1}^{d}x_{j}x_{j+1}\right)
  \mathfrak{F}(x_{d+2},x_{d+3},\dots,x_{d+s})
  \label{eq:F_wronsk}
\end{eqnarray}
where $d,s\in\mathbb{Z}_{+}$. By sending $s\rightarrow\infty$ in
(\ref{eq:F_wronsk}) one arrives at the equality
\begin{equation}
  \mathfrak{F}(x_{1},\dots,x_{d})\,\mathfrak{F}(Tx)
  - \mathfrak{F}(x_{2},\dots,x_{d})\,\mathfrak{F}(x)
  = \left(\prod_{k=1}^{d}x_{k}x_{k+1}\right)\mathfrak{F}(T^{d+1}x)
  \label{eq:lincomb_Fx_FTx}
\end{equation}
which is true for any $d\in\mathbb{Z}_{+}$ and $x\in D$.

\subsection{The characteristic function and the Weyl m-function}

Let us consider a semi-infinite symmetric Jacobi matrix $J$ of the
form
\begin{equation}
  J = \begin{pmatrix}\lambda_{0} & w_{0}\\
    w_{0} & \lambda_{1} & w_{1}\\
    & w_{1} & \lambda_{2} & w_{2}\\
    &  & \ddots & \ddots & \ddots
  \end{pmatrix}
  \label{eq:Jacobi_J}
\end{equation}
where $w=\{w_{n}\}_{n=0}^{\infty}\subset(0,+\infty)$ and
$\lambda=\{\lambda_{n}\}_{n=0}^{\infty}\subset\mathbb{R}$. In the
present paper, such a matrix $J$ is always supposed to represent a
unique self-adjoint operator on $\ell^{2}(\mathbb{Z}_{+})$, i.e. there
exists exactly one self-adjoint operator such that the canonical basis
is contained in its domain and its matrix in the canonical basis
coincides with $J$. For example, this hypothesis is evidently
fulfilled if the sequence $\{w_{n}\}$ is bounded. With some abuse of
notation we use the same symbol, $J$, to denote this unique
self-adjoint operator.

In \cite{StampachStovicek13a} we have introduced the characteristic
function $\mathcal{F}_{J}$ for a Jacobi matrix $J$ provided its
elements satisfy the condition
\begin{equation}
  \sum_{n=0}^{\infty}\frac{w_{n}^{\,2}}{|(\lambda_{n}-z)(\lambda_{n+1}-z)|}
  < \infty
  \label{eq:assum_sum_w}
\end{equation}
for some (and hence any) $z\in\mathbb{C}\setminus\der(\lambda)$ where
$\der(\lambda)$ denotes the set of all finite cluster points of the
diagonal sequence $\lambda$, i.e. the set of limit values of all
possible convergent subsequences of $\lambda$. By Corollary~17 in
\cite{StampachStovicek13a}, the condition (\ref{eq:assum_sum_w}) also
guarantees that the matrix $J$ represents a unique self-adjoint
operator on $\ell^{2}(\mathbb{Z}_{+})$. The definition of the
characteristic function reads
\begin{equation}
  \mathcal{F}_{J}(z)
  := \mathfrak{F}\!\left(\left\{ \frac{\gamma_{n}^{\,2}}
      {\lambda_{n}-z}\right\} _{n=0}^{\infty}\right)
  \label{eq:def_FJ_symm}
\end{equation}
where $\{\gamma_{n}\}_{n=0}^{\infty}$ is determined by the
off-diagonal sequence $w$ recursively as follows: $\gamma_{0}=1$ and
$\gamma_{k+1}=w_{k}/\gamma_{k}$, for $k\in\mathbb{Z}_{+}$. The zeros
of the characteristic function have actually been shown in
\cite{StampachStovicek13a} to coincide with the eigenvalues of $J$.
More precisely, under assumption (\ref{eq:assum_sum_w}) it holds true
that
\begin{equation}
  \spec(J)\setminus\der(\lambda)=\spec_{p}(J)\setminus\der(\lambda)
  = \mathfrak{Z}(J)
  \label{eq:spec_p_Z}
\end{equation}
where
\begin{equation}
  \mathfrak{Z}(J) := \left\{ z\in\mathbb{C}\setminus\der(\lambda);
    \,\lim_{u\to z}\,(u-z)^{r(z)}\mathcal{F}_{J}(u)=0\right\}
  \label{eq:def_Z}
\end{equation}
and $r(z):=\sum_{k=0}^{\infty}\delta_{z,\lambda_{k}}\in\mathbb{Z}_{+}$
is the number of occurrences of an element $z$ in the sequence
$\lambda$.  Moreover, the eigenvalues of $J$ have no accumulation
points in $\mathbb{C}\setminus\der(\lambda)$ and all of them are
simple.

Finally, denoting by $\{e_{n};\ n\in\mathbb{Z}_{+}\}$ the canonical
basis in $\ell^{2}(\mathbb{Z}_{+})$, let us recall that the Weyl
m-function $m(z):=\langle e_{0},(J-z)^{-1}e_{0}\rangle$ is expressible
in terms of $\mathfrak{F}$,
\begin{equation}
  m(z) = \frac{1}{\lambda_{0}-z}\,
  \mathfrak{F}\!\left(\left\{ \frac{\gamma_{k}^{\,2}}
      {\lambda_{k}-z}\right\} _{k=1}^{\infty}\right)
  \mathfrak{F}\!\left(\left\{ \frac{\gamma_{k}^{\,2}}
      {\lambda_{k}-z}\right\} _{k=0}^{\infty}\right)^{\!-1}
  \label{eq:Weyl_m}
\end{equation}
for $z\notin\spec(J)\cup\der(\lambda)$. From its definition it is
clear that $m(z)$ is meromorphic on $\mathbb{C}\setminus\der(\lambda)$
with only simple real poles, and the set of these poles coincides with
$\mathfrak{Z}(J)$.

\section{Some general results about orthogonal polynomials \label{sec:OPs}}

The theory of OPs is now developed to a considerable depth. Let us
just mention the basic monographs \cite{Akhiezer,Chihara}. If
convenient, a sequence of OPs, $\{P_{n}\}_{n=0}^{\infty}$, where $\deg
P_{n}=n$, may be supposed to be already normalized. Then one way how
to define such a sequence is by requiring the orthogonality relation
\begin{equation}
  \int_{\mathbb{R}}P_{m}(x)P_{n}(x)\,\mbox{d}\mu(x)
  = \delta_{mn},\quad m,n\in\mathbb{Z}_{+},
  \label{eq:OGrel_OPs}
\end{equation}
with respect to a positive Borel measure $\mu$ on $\mathbb{R}$ such
that
\[
\int_{\mathbb{R}}x^{2n}\,\mbox{d}\mu(x)
< \infty,\ \ \forall n\in\mathbb{Z}_{+}.
\]
Without loss of generality one may assume that $\mu$ is a probability
measure, i.e. $\mu(\mathbb{R})=1$, and $P_{0}(x)=1$. As usual, $\mu$
is unambiguously determined by the distribution function
$x\mapsto\mu((-\infty,x])$.  In particular, the distribution function
is supposed to be continuous from the right. With some abuse of
notation, the distribution function will again be denoted by the
symbol $\mu$. The set of monomials, $\{x^{n};\ n\in\mathbb{Z}_{+}\}$,
is required to be linearly independent in
$L^{2}(\mathbb{R},\mbox{d}\mu)$ and so the function $\mu$ should have
an infinite number of points of increase.

It is well known that a sequence of OPs, if normalized, satisfies a
three-term recurrence relation,
\begin{equation}
  xP_{n}(x) = w_{n-1}P_{n-1}(x)+\lambda_{n}P_{n}(x)+w_{n}P_{n+1}(x),
  \quad n\in\mathbb{N},
  \label{eq:recur_OPs}
\end{equation}
with the initial conditions $P_{0}(x)=1$ and
$P_{1}(x)=(x-\lambda_{0})/w_{0}$, where
$\{\lambda_{n}\}_{n=0}^{\infty}$ is a real sequence and
$\{w_{n}\}_{n=0}^{\infty}$ is a positive sequence
\cite{Akhiezer,Chihara}. However, due to Favard's theorem, the
opposite statement is also true. For any sequence of real polynomials,
$\{P_{n}\}_{n=0}^{\infty}$, with $\deg P_{n}=n$, satisfying the
recurrence (\ref{eq:recur_OPs}) with the above given initial
conditions there exists a unique positive functional on the space of
real polynomials making this sequence orthonormal. Moreover, if the
matrix $J$ given in (\ref{eq:Jacobi_J}) represents a unique
self-adjoint operator on $\ell^{2}(\mathbb{Z}_{+})$ then this
functional is induced by a unique positive Borel measure $\mu$ on
$\mathbb{R}$.  This means that (\ref{eq:OGrel_OPs}) is fulfilled. In
other words, in that case Hamburger's moment problem is determinate;
see, for instance, \S4.1.1 and Corollary~2.2.4 in \cite{Akhiezer} or
Theorem~3.4.5 in \cite{EKoelink}.

Using (\ref{eq:F_T_recur}) one easily verifies that the solution of
(\ref{eq:recur_OPs}) with the given initial conditions is related to
$\mathfrak{F}$ through the identity
\begin{equation}
  P_{n}(x) = \prod_{k=0}^{n-1}\left(\frac{x-\lambda_{k}}{w_{k}}\right)
  \mathfrak{F}\!\left(\left\{ \frac{\gamma_{k}^{\,2}}
      {\lambda_{k}-x}\right\} _{k=0}^{n-1}\right),
  \quad n\in\mathbb{Z}_{+}.
  \label{eq:OPs_P_F}
\end{equation}
A second linearly independent solution of (\ref{eq:recur_OPs}) can be
written in the form
\begin{equation}
  Q_{n}(x) = \frac{1}{w_{0}}\prod_{k=1}^{n-1}
  \left(\frac{x-\lambda_{k}}{w_{k}}\right)
  \mathfrak{F}\!\left(\left\{ \frac{\gamma_{k+1}^{\,2}}
      {\lambda_{k+1}-x}\right\} _{k=0}^{n-2}\right),
  \quad n\in\mathbb{N}.
  \label{eq:OPs_Q_F}
\end{equation}
The latter solution satisfies the initial conditions $Q_{0}(x)=0$ and
$Q_{1}(x)=1/w_{0}$.

Being given a sequence of OPs, $\{P_{n}\}_{n=0}^{\infty}$, defined via
the recurrence rule (\ref{eq:recur_OPs}), i.e. via formula
(\ref{eq:OPs_P_F}), a crucial question is how the measure of
orthogonality looks like.  Relying on the function $\mathfrak{F}$ we
provide a partial description of the measure $\mu$. Doing so we
confine ourselves to Jacobi matrices for which the set of cluster
points of the diagonal sequence $\lambda$ is discrete. This assumption
is not too restrictive, though, since it turns out that
$\der(\lambda)$ is a one-point set or even empty in many practical
applications of interest.

\begin{proposition}\label{thm:OG_relation}
  Let $J$ be a Jacobi matrix introduced in (\ref{eq:Jacobi_J}) and
  $\der(\lambda)$ be composed of isolated points only. Suppose there
  exists $z_{0}\in\mathbb{C}$ such that (\ref{eq:assum_sum_w}) is
  fulfilled for $z=z_{0}$. Then the orthogonality relation for the
  sequence of OPs determined in (\ref{eq:recur_OPs}) reads
  \begin{equation}
    \int_{\mathbb{R}}P_{m}(x)P_{n}(x)\,
    \mbox{d}\nu(x)+\sum_{x\in\mathcal{D}}
    \frac{P_{m}(x)P_{n}(x)}{\|P(x)\|^{2}}
    = \delta_{mn},\ \ m,n\in\mathbb{Z}_{+},
    \label{eq:gener_OG_rel}
\end{equation}
where $\mathcal{D}=\spec_{p}(J)\cap\der(\lambda)$ and $\|P(x)\|$
stands for the $\ell^{2}$-norm of the vector
$P(x)=(P_{0}(x),P_{1}(x),\dots)$.  The measure $d\nu$ is positive,
purely discrete and supported on the set $\mathfrak{Z}(J)$. The
magnitude of jumps of the step function $\nu(x)$ at those points
$x\in\mathfrak{Z}(J)$ which do not belong to the range of $\lambda$
equals
\begin{equation}
  \nu(x)-\nu(x-0)=\frac{1}{x-\lambda_{0}}\,
  \mathfrak{F}\!\left(\left\{ \frac{\gamma_{k+1}^{\,2}}
      {\lambda_{k+1}-x}\right\} _{k=0}^{\infty}\right)
  \left[\frac{\mbox{d}}{\mbox{\mbox{d}}x}\,
    \mathfrak{F}\!\left(\left\{ \frac{\gamma_{k}^{\,2}}
        {\lambda_{k}-x}\right\} _{k=0}^{\infty}\right)\right]^{-1}.
  \label{eq:jumps}
\end{equation}
\end{proposition}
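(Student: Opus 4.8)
The plan is to recover the orthogonality measure from the Weyl $m$-function via the Stieltjes inversion formula, exploiting the explicit expression \eqref{eq:Weyl_m} of $m(z)$ in terms of $\mathfrak{F}$. The key structural fact is that under hypothesis \eqref{eq:assum_sum_w} the characteristic function $\mathcal{F}_{J}$ is analytic on $\mathbb{C}\setminus\der(\lambda)$, so by \eqref{eq:spec_p_Z} the spectrum of $J$ away from $\der(\lambda)$ is pure point and coincides with the zero set $\mathfrak{Z}(J)$, these being exactly the (simple, real) poles of $m(z)$. The accumulation points of the spectrum can therefore only lie in $\der(\lambda)$, which by assumption is a discrete set of isolated points. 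This lets me split the spectrum into the ``nice'' part $\mathfrak{Z}(J)$ (isolated eigenvalues carried by $d\nu$) and the possibly singular part sitting on $\der(\lambda)$, collected in the finite or sparse sum over $\mathcal{D}=\spec_{p}(J)\cap\der(\lambda)$.

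First I would identify the spectral measure of $J$ associated with the cyclic vector $e_{0}$ as the orthogonality measure $\mu$ for the $P_{n}$; this is the standard correspondence, since $P_{n}(J)e_{0}=e_{n}$ and $\langle e_{0},(J-z)^{-1}e_{0}\rangle=\int(x-z)^{-1}d\mu(x)$. Then I would apply Stieltjes--Perron inversion to $m(z)$: the jump of the distribution function at an isolated pole $x_{0}\in\mathfrak{Z}(J)$ equals the residue of $m$ there, namely $\lim_{z\to x_{0}}(x_{0}-z)\,m(z)$. Using \eqref{eq:Weyl_m} and writing $\mathcal{F}_{J}(z)=\mathfrak{F}(\{\gamma_k^2/(\lambda_k-z)\}_{k=0}^\infty)$, the residue at a simple zero of $\mathcal{F}_{J}$ that is not a value of $\lambda$ becomes the ratio of the numerator $\mathfrak{F}(\{\gamma_{k+1}^2/(\lambda_{k+1}-x)\}_{k=0}^\infty)/(x-\lambda_0)$ to the derivative $\frac{d}{dx}\mathcal{F}_{J}(x)$, which is precisely \eqref{eq:jumps}. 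Here I must check that $x_0\notin\range(\lambda)$ guarantees both factors in \eqref{eq:Weyl_m} are finite and $x_0$ is genuinely a simple zero of the denominator, so the residue computation is legitimate.

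Next I would verify that the total mass assigned to $\mathfrak{Z}(J)$ together with the atoms on $\mathcal{D}$ accounts for all of $\mu$, i.e. that $d\mu=d\nu+\sum_{x\in\mathcal{D}}\|P(x)\|^{-2}\delta_x$, with no absolutely continuous or singular continuous part remaining. This follows because $m$ is meromorphic on $\mathbb{C}\setminus\der(\lambda)$ with only the stated poles, so its boundary values have no jump across the real axis off $\der(\lambda)$, forcing the continuous part of $\mu$ to be supported in $\der(\lambda)$; the discreteness of $\der(\lambda)$ then reduces that part to the atomic sum over $\mathcal{D}$. For the weights at the points of $\mathcal{D}$ I would identify $\|P(x)\|^{-2}$ with the spectral weight by the Parseval/Christoffel argument: if $x\in\spec_p(J)$ is an eigenvalue with eigenvector proportional to $P(x)=(P_0(x),P_1(x),\dots)$, then the mass of $\mu$ at $x$ is $|\langle e_0,\psi_x\rangle|^2=\|P(x)\|^{-2}$ after normalization, which yields the second term of \eqref{eq:gener_OG_rel}. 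Finally, rewriting $\int P_mP_n\,d\mu=\delta_{mn}$ in terms of $d\nu$ and the atomic sum gives the claimed relation.

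The main obstacle I expect is controlling the continuous part of the spectrum near the cluster set $\der(\lambda)$ and justifying that every point of $\mathcal{D}$ is a genuine point mass with weight $\|P(x)\|^{-2}$ rather than an accumulation of $\mathfrak{Z}(J)$-atoms or a piece of singular continuous spectrum. The delicate interchange is the passage from the residue formula for isolated zeros of $\mathcal{F}_{J}$ off $\range(\lambda)$ to a clean statement at points where $\lambda_n=x$ for some $n$, since there the individual terms $\gamma_k^2/(\lambda_k-x)$ blow up and one must argue through the regularized limit encoded in $r(z)$ from \eqref{eq:def_Z}; handling the cancellation so that $\|P(x)\|<\infty$ precisely on $\spec_p(J)$ is the technical heart of the argument.
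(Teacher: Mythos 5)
Your proposal is correct and follows essentially the same route as the paper's proof: identify $\mu$ with the spectral measure $\langle e_{0},E_{J}(\cdot)e_{0}\rangle$, use countability of the spectrum (discrete part off $\der(\lambda)$ plus the discrete set $\der(\lambda)$ itself) to rule out any continuous component, assign weight $\|P(x)\|^{-2}$ to eigenvalues via the eigenvector $P(x)$, and obtain \eqref{eq:jumps} as minus the residue of the Weyl function \eqref{eq:Weyl_m} at the simple zeros of $\mathcal{F}_{J}$ off the range of $\lambda$. The only cosmetic difference is that you invoke Stieltjes--Perron inversion where the paper writes the atom as a Riesz spectral projection (contour integral of $m$), which amounts to the same residue computation.
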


\begin{remark*}
  In Proposition~\ref{thm:OG_relation} we avoided considering the
  points from $\mathfrak{Z}(J)$ which belong to the range of
  $\lambda$. We remark, however, that such points, if any, can be
  treated as well, similarly to (\ref{eq:jumps}), though in somewhat
  more complicated way. But we omit the details for the sake of
  simplicity.
\end{remark*}

\begin{proof}
  Let $E_{J}$ stand for the projection-valued spectral measure of the
  self-adjoint operator $J$. As is well known, the measure of
  orthogonality $\mu$ is related to $E_{J}$ by the identity
  \begin{equation}
    \mu(M) = \langle e_{0},E_{J}(M)e_{0}\rangle
    \label{eq:mu_eq_spec_meas}
  \end{equation}
  holding for any Borel set $M\subset\mathbb{R}$. Here again, $e_{0}$
  denotes the first vector of the canonical basis in
  $\ell^{2}(\mathbb{Z}_{+})$.  Moreover, $\supp(\mu)=\spec(J)$. In
  fact, let us recall that (\ref{eq:mu_eq_spec_meas}) follows from the
  observation that $e_{n}=P_{n}(J)e_{0}$ for all $n\in\mathbb{Z}_{+}$
  and from the Spectral Theorem since
  \[
  \delta_{mn} = \langle e_{m},e_{n}\rangle
  = \langle e_{0},P_{m}(J)P_{n}(J)e_{0}\rangle
  = \int_{\mathbb{R}}P_{m}(x)P_{n}(x)\,\mbox{d}\mu(x).
  \]

  The set $\der(\lambda)$ is closed and, by hypothesis, discrete and
  therefore at most countable. Further we know, referring to
  (\ref{eq:spec_p_Z}), that the part of the spectrum of $J$ lying in
  $\mathbb{C}\setminus\der(\lambda)$ is discrete, too. Consequently,
  $\spec(J)$ is countable and therefore the continuous part of the
  spectral measure $E_{J}$ necessarily vanishes, i.e. $J$ has a pure
  point spectrum. In that case, of course, in order to determine the
  spectral measure $E_{J}$ it suffices to determine the projections
  $E_{J}(\{x\})$ for all $x\in\spec_{p}(J)$. Since the vector $P(z)$
  is a formal solution of $(J-z)P(z)=0$, unique up to a constant
  multiplier, one has the well known criterion $x\in\spec_{p}(J)$ iff
  $\|P(x)\|<\infty$. Moreover, $P_{0}(x)=1$ and so
  \[
  \langle e_{0},E_{J}(\{x\})e_{0}\rangle
  = \frac{|\langle P(x),e_{0}\rangle|^{2}}{\|P(x)\|^{2}}
  = \frac{1}{\|P(x)\|^{2}}\,.
  \]

  The point spectrum of $J$ may be split into two disjoint sets,
  $\spec_{p}(J)=\mathfrak{Z}(J)\cup\mathcal{D}$.  The Hilbert space
  and the spectral measure decompose correspondingly.  Put
  \[
  J' = J\, E_{J}(\mathfrak{Z}(J))\ \ \text{and}\ \ \nu(x)
  = \langle e_{0},E_{J'}((-\infty,x])e_{0}\rangle
  \ \ \text{for}\ x\in\mathbb{R}.
  \]
  Then the measure $\mbox{d}\nu$ is supported on $\mathfrak{Z}(J)$ and
  \[
  \int_{\mathbb{R}}f(x)\,\mbox{d}\mu(x)
  = \int_{\mathbb{R}}f(x)\,\mbox{d}\nu(x)+\sum_{x\in\mathcal{D}}
  \frac{f(x)}{\|P(x)\|^{2}}
  \]
  for all $f\in C(\mathbb{R})$. As pointed out in (\ref{eq:spec_p_Z}),
  any $x\in\mathfrak{Z}(J)$ is a simple isolated eigenvalue of $J$.
  As usual, $E_{J}(\{x\})$ can be written as the Riezs spectral
  projection.  Choosing $\epsilon>0$ sufficiently small one has
  \[
  \langle e_{0},E_{J}(\{x\})e_{0}\rangle
  = -\frac{1}{2\pi i}\oint_{|x-z|=\epsilon}m(z)\,\mbox{d}z=-\Res(m,x).
  \]
  If, in addition, $x$ does not belong to the range of $\lambda$ then,
  in view of (\ref{eq:Weyl_m}) and (\ref{eq:def_FJ_symm}),
  (\ref{eq:def_Z}) (with $r(x)=0$), we may evaluate
  \[
  \Res(m,x) = \frac{1}{\lambda_{0}-x}\,
  \mathfrak{F}\!\left(\left\{ \frac{\gamma_{k+1}^{\,2}}
      {\lambda_{k+1}-x}\right\} _{k=0}^{\infty}\right)
  \left[\,\frac{\mbox{d}}{\mbox{d}z}\bigg|_{z=x}
    \mathfrak{F}\!\left(\left\{ \frac{\gamma_{k}^{\,2}}
        {\lambda_{k}-z}\right\} _{k=0}^{\infty}\right)\right]^{-1}\!.
  \]
  This concludes the proof.
\end{proof}

\begin{remark}\label{rem:isol_eigen_only}
  Of course, the sum on the LHS of (\ref{eq:gener_OG_rel}) is void if
  $\der(\lambda)=\emptyset$.  The sum also simplifies in the
  particular case when $J$ is a compact operator satisfying
  (\ref{eq:assum_sum_w}). One can readily see that this happens iff
  $\lambda_{n}\rightarrow0$ and $w\in\ell^{2}(\mathbb{Z}_{+})$. Then
  Proposition~\ref{thm:OG_relation} is applicable and the
  orthogonality relation (\ref{eq:gener_OG_rel}) takes the form
  \[
  \int_{\mathbb{R}}P_{n}(x)P_{m}(x)\,\mbox{d}\nu(x)
  + \Lambda_{0}P_{n}(0)P_{m}(0)=\delta_{mn}.
  \]
  If $J$ is invertible then $\Lambda_{0}$ vanishes but in general
  $\Lambda_{0}$ may be strictly positive as demonstrated, for
  instance, by the example of $q$-Lommel polynomials, see
  \cite[Theorem 4.2]{Koelink_etal}.
\end{remark}

For the intended applications of Proposition~\ref{thm:OG_relation} the
following particular case is of importance. Let
$\lambda\in\ell^{1}(\mathbb{Z}_{+})$ be real and
$w\in\ell^{2}(\mathbb{Z}_{+})$ positive. Then $J$ is compact and
(\ref{eq:assum_sum_w}) holds for any $z\neq0$ not belonging to the
range of $\lambda$. Moreover, the characteristic function of $J$ can
be regularized with the aid of the entire function
\[
\phi_{\lambda}(z):=\prod_{n=0}^{\infty}(1-z\lambda_{n}).
\]
Let us define
\begin{equation}
  \mathcal{G}_{J}(z) := \begin{cases}
    \phi_{\lambda}(z)\mathcal{F}_{J}(z^{-1}) & \mbox{ if }z\neq0,\\
    1 & \mbox{ if }z=0.
  \end{cases}
  \label{eq:def_G_J}
\end{equation}
The function $\mathcal{G}_{J}$ is entire and, referring to
(\ref{eq:spec_p_Z}), one has
\begin{equation}
  \spec(J) = \{0\}\cup\left\{ z^{-1};\,\mathcal{G}_{J}(z)=0\right\} \!. 
  \label{eq:specJ_GJ}
\end{equation}
Since
\[
m(z) = \int_{\mathbb{R}}\frac{\mbox{d}\mu(x)}{x-z}
\]
where $\mbox{d}\mu$ is the measure from (\ref{eq:mu_eq_spec_meas}),
formula (\ref{eq:Weyl_m}) implies that the identity
\begin{equation}
  \int_{\mathbb{R}}\frac{\mbox{d}\mu(x)}{1-xz}
  = \frac{\mathcal{G}_{J^{(1)}}(z)}{\mathcal{G}_{J}(z)}
  \label{eq:int_eq_ratio_G}
\end{equation}
holds for any $z\notin\mathcal{G}_{J}^{-1}(\{0\})$. Here $J^{(1)}$
denotes the Jacobi operator determined by the diagonal sequence
$\{\lambda_{n+1}\}_{n=0}^{\infty}$ and the weight sequence
$\{w_{n+1}\}_{n=0}^{\infty}$, see (\ref{eq:J_sup_k}) below.

Let us denote by $\{\mu_{n}\}_{n=1}^{\infty}$ the set of non-zero
eigenvalues of the compact operator $J$. Remember that all eigenvalues
of $J$ are necessarily simple and particularly the multiplicity of $0$
as an eigenvalues of $J$ does not exceed $1$. Since $\mbox{d}\mu$ is
supported by $\spec(J)$, formula (\ref{eq:int_eq_ratio_G}) yields the
Mittag-Leffler expansion
\begin{equation}
  \Lambda_{0}+\sum_{k=1}^{\infty}\frac{\Lambda_{k}}{1-\mu_{k}z}
  = \frac{\mathcal{G}_{J^{(1)}}(z)}{\mathcal{G}_{J}(z)}
  \label{eq:Mittag-Leffler_ratio_G}
\end{equation}
where $\Lambda_{k}$ denotes the jump of the piece-wise constant
function $\mu(x)$ at $x=\mu_{k}$, and similarly for $\Lambda_{0}$ and
$x=0$. From (\ref{eq:Mittag-Leffler_ratio_G}) one deduces that
\begin{equation}
  \Lambda_{k} = \lim_{z\rightarrow\mu_{k}^{-1}}(1-\mu_{k}z)\,
  \frac{\mathcal{G}_{J^{(1)}}(z)}{\mathcal{G}_{J}(z)}
  = -\mu_{k}\,\frac{\mathcal{G}_{J^{(1)}}(\mu_{k}^{-1})}
  {\mathcal{G}'_{J}(\mu_{k}^{-1})}
\end{equation}
for $k\in\mathbb{N}$. This can be viewed as a regularized version of
the identity (\ref{eq:jumps}) in this particular case. We have shown
the following proposition.

\begin{proposition}\label{prop:OGrel_Gfunc}
  Let $\lambda$ be a real sequence from $\ell^{1}(\mathbb{Z}_{+})$ and
  $w$ be a positive sequence from $\ell^{2}(\mathbb{Z}_{+})$. Then the
  measure of orthogonality $\mbox{d}\mu$ for the corresponding
  sequence of OPs defined in (\ref{eq:recur_OPs}) fulfills
  \[
  \supp(\mbox{d}\mu)\setminus\{0\}=\{z^{-1};\,\mathcal{G}_{J}(z)=0\}
  \]
  where the RHS is a bounded discrete subset of $\mathbb{R}$ with $0$
  as the only accumulation point. Moreover, for
  $x\in\supp(\mbox{d}\mu)\setminus\{0\}$ one has
  \begin{equation}
    \mu(x)-\mu(x-0)=-x\,\frac{\mathcal{G}_{J^{(1)}}(x^{-1})}
    {\mathcal{G}'_{J}(x^{-1})}\,.
    \label{eq:jump_mu_in_x}
  \end{equation}
\end{proposition}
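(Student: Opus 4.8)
The plan is to assemble the facts established in the paragraphs immediately preceding the statement; under the stated hypotheses all the needed analytic machinery is already available, so the proof amounts to extracting the two assertions cleanly. First I would dispose of the support claim. As recalled in the proof of Proposition~\ref{thm:OG_relation}, the measure of orthogonality satisfies $\supp(\mu)=\spec(J)$. Since $\lambda\in\ell^{1}(\mathbb{Z}_{+})$ and $w\in\ell^{2}(\mathbb{Z}_{+})$, the operator $J$ is compact, so $\spec(J)$ is bounded, its nonzero part is discrete, and $0$ is the only possible accumulation point. Combining this with the spectral description (\ref{eq:specJ_GJ}) in terms of the entire regularized characteristic function $\mathcal{G}_{J}$ gives
\[
\supp(\mu)\setminus\{0\}=\spec(J)\setminus\{0\}=\{z^{-1};\,\mathcal{G}_{J}(z)=0\},
\]
and compactness of $J$ yields the asserted topological structure of the right-hand side.

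Next I would derive the jump formula (\ref{eq:jump_mu_in_x}). Because $J$ is compact with the countable spectrum just described, the spectral measure $E_{J}$ has no continuous part and $\mu$ is purely atomic, $\mathrm{d}\mu=\Lambda_{0}\delta_{0}+\sum_{k=1}^{\infty}\Lambda_{k}\delta_{\mu_{k}}$, where $\{\mu_{k}\}$ enumerates the nonzero eigenvalues of $J$ and $\Lambda_{k}$ is the jump of the distribution function at $\mu_{k}$. Substituting this atomic form into the generating identity (\ref{eq:int_eq_ratio_G}) yields the Mittag-Leffler expansion (\ref{eq:Mittag-Leffler_ratio_G}). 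Fixing $k$, multiplying by $(1-\mu_{k}z)$ and letting $z\to\mu_{k}^{-1}$ kills every other term of the series, so $\Lambda_{k}$ equals the residue of $\mathcal{G}_{J^{(1)}}/\mathcal{G}_{J}$ at the simple pole $z=\mu_{k}^{-1}$; evaluating this residue and setting $x=\mu_{k}$ gives exactly (\ref{eq:jump_mu_in_x}).

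The step I expect to be the genuine obstacle is the passage from the Mittag-Leffler sum to the closed-form residue, which presupposes that $\mu_{k}^{-1}$ is a \emph{simple} zero of $\mathcal{G}_{J}$, so that $\mathcal{G}'_{J}(\mu_{k}^{-1})\neq0$ and the quotient in (\ref{eq:jump_mu_in_x}) is meaningful. This is secured by the simplicity of the eigenvalues recorded after (\ref{eq:spec_p_Z}): here $\der(\lambda)=\{0\}$, so every nonzero eigenvalue of $J$ is simple, whence the corresponding zero of the entire function $\mathcal{G}_{J}$ is simple too. A subsidiary technical point is the legitimacy of the termwise limit in the series, which I would justify by the local uniform convergence of the expansion away from the poles $\{\mu_{k}^{-1}\}$—a consequence of compactness, i.e. of $\mu_{k}\to0$.
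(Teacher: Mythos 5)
Your proposal follows the paper's own route essentially verbatim: the support statement is read off from $\supp(\mu)=\spec(J)$, compactness of $J$ and (\ref{eq:specJ_GJ}); the jump formula is obtained by inserting the purely atomic measure into (\ref{eq:int_eq_ratio_G}), which is exactly the Mittag-Leffler expansion (\ref{eq:Mittag-Leffler_ratio_G}), and then isolating $\Lambda_{k}$ by multiplying by $(1-\mu_{k}z)$ and passing to the limit. This is precisely the argument the paper gives in the paragraphs preceding the proposition, so in structure your proof is the same.

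The one unsound step is the point you yourself single out as the genuine obstacle. You claim that simplicity of the nonzero eigenvalues of $J$ (the fact recorded after (\ref{eq:spec_p_Z})) implies that $\mu_{k}^{-1}$ is a simple zero of the entire function $\mathcal{G}_{J}$. That implication does not hold as stated: eigenvalue simplicity for a half-line Jacobi operator is automatic (the solution space of the three-term recurrence with one boundary condition is one-dimensional), and what it yields analytically---via the simple poles of the Weyl m-function, equivalently via (\ref{eq:Mittag-Leffler_ratio_G}) with $\Lambda_{k}>0$---is only that the ratio $\mathcal{G}_{J^{(1)}}/\mathcal{G}_{J}$ has a simple pole at $\mu_{k}^{-1}$, i.e.\ that the order of vanishing of $\mathcal{G}_{J}$ there exceeds that of $\mathcal{G}_{J^{(1)}}$ by exactly one. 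If both orders were positive, the limit defining $\Lambda_{k}$ would still exist, but the right-hand side of (\ref{eq:jump_mu_in_x}) would be the meaningless expression $0/0$. What actually closes this gap is the Wronskian-type identity of Lemma~\ref{lem:vector_xi}: evaluating (\ref{eq:ChristDarb_form}) at a real zero $z_{0}$ of $\xi_{-1}=\mathcal{G}_{J}$ gives $-\,\mathcal{G}'_{J}(z_{0})\,z_{0}\,\mathcal{G}_{J^{(1)}}(z_{0})=z_{0}^{-2}\sum_{k}\xi_{k}(z_{0})^{2}>0$, so neither $\mathcal{G}'_{J}(z_{0})$ nor $\mathcal{G}_{J^{(1)}}(z_{0})$ vanishes; this is the content of Proposition~\ref{prop:G_J_simple_zeros}, whose proof is independent of the present proposition and may legitimately be invoked here. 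In fairness, the paper's own derivation performs the same limit computation without comment, so your write-up is no less complete than the original; but your proposed justification of simplicity should be replaced by the one just described.
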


Let us denote $\xi_{-1}(z):=\mathcal{G}_{J}(z)$ and
\begin{equation}
  \xi_{k}(z) := \left(\prod_{l=0}^{k-1}w_{l}\right)z^{k+1}\,
  \mathcal{G}_{J^{(k+1)}}(z),\quad k\in\mathbb{Z}_{+},
  \label{eq:psi_k_def}
\end{equation}
where
\begin{equation}
  J^{(k)} = \begin{pmatrix}\lambda_{k} & w_{k}\\
    w_{k} & \lambda_{k+1} & w_{k+1}\\
    & w_{k+1} & \lambda_{k+2} & w_{k+2}\\
    &  & \ddots & \ddots & \ddots
  \end{pmatrix}\!.
  \label{eq:J_sup_k}
\end{equation}

\begin{lemma}\label{lem:vector_xi}
  Let $\lambda\in\ell^{1}(\mathbb{Z}_{+})$,
  $w\in\ell^{2}(\mathbb{Z}_{+})$ and $z\neq0$. Then the vector
  \begin{equation}
    \xi(z) = (\xi_{0}(z),\xi_{1}(z),\xi_{2}(z),\dots)
    \label{eq:vector_xi}
  \end{equation}
  belongs to $\ell^{2}(\mathbb{Z}_{+})$, and one has
  \begin{equation}
    \frac{1}{z^{2}}\sum_{k=0}^{\infty}\xi_{k}(z)^{2}
    = \xi_{-1}(z)\xi'_{0}(z)-\xi'_{-1}(z)\xi_{0}(z).
    \label{eq:ChristDarb_form}
  \end{equation}
  Moreover, the vector $\xi(z)$ is nonzero and so
  \begin{equation}
    \xi_{-1}(z)\xi'_{0}(z)-\xi'_{-1}(z)\xi_{0}(z)
    > 0,\ \forall z\in\mathbb{R}\setminus\{0\},
    \label{eq:sign_x0_x1}
  \end{equation}
  provided the sequences $\lambda$ and $w$ are both real.
\end{lemma}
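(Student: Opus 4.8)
The plan is to reduce the whole statement to a single three-term recurrence for the numbers $\xi_k(z)$ and then to perform a Christoffel--Darboux-type telescoping on it. The first step is to produce that recurrence. Applying the rule (\ref{eq:F_T_recur}) to the sequence $\{\gamma_n^{\,2}/(\lambda_n-u)\}_{n\ge k}$ and using that, by its very definition, $\mathfrak{F}$ depends on its argument only through the products $x_jx_{j+1}$ of neighbouring entries (so that the normalization by the $\gamma$'s is immaterial and only $\gamma_n\gamma_{n+1}=w_n$ matters), I would obtain
\[
\mathcal{F}_{J^{(k)}}(u)=\mathcal{F}_{J^{(k+1)}}(u)-\frac{w_k^{\,2}}{(\lambda_k-u)(\lambda_{k+1}-u)}\,\mathcal{F}_{J^{(k+2)}}(u).
\]
Substituting $u=z^{-1}$, multiplying by $\prod_{n\ge k}(1-z\lambda_n)$ and regrouping the factors $(1-z\lambda_k)$ converts this into $\mathcal{G}_{J^{(k)}}(z)=(1-z\lambda_k)\mathcal{G}_{J^{(k+1)}}(z)-w_k^{\,2}z^{2}\mathcal{G}_{J^{(k+2)}}(z)$. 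Rewriting it through (\ref{eq:psi_k_def}) and putting $w_{-1}:=1$ yields the clean recurrence
\[
w_kz\,\xi_k+w_{k+1}z\,\xi_{k+2}=(1-z\lambda_{k+1})\,\xi_{k+1},\qquad k\ge-1,
\]
which contains $\xi_{-1}=\mathcal{G}_J(z)$ as its $k=-1$ instance.

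For the $\ell^2$ claim I would argue directly from (\ref{eq:psi_k_def}). Because $\lambda\in\ell^1$ the tail product $\prod_{n\ge k+1}(1-z\lambda_n)\to1$, while $w\in\ell^2$ and $\lambda\in\ell^1$ put the sequence $\{\gamma_n^{\,2}/(\lambda_n-z^{-1})\}$ into $D$, so (\ref{eq:lim_F_T_n}) gives $\mathcal{F}_{J^{(k+1)}}(z^{-1})\to1$; hence $\mathcal{G}_{J^{(k+1)}}(z)\to1$ and stays bounded in $k$. As $\xi_k(z)=\big(\prod_{l=0}^{k-1}w_l\big)z^{k+1}\mathcal{G}_{J^{(k+1)}}(z)$ and the squared modulus of the prefactor has consecutive ratio $|w_kz|^2\to0$ (here $w\in\ell^2$ forces $w_k\to0$), the ratio test yields $\sum_k|\xi_k(z)|^2<\infty$, so $\xi(z)\in\ell^2$. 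The same asymptotics $\xi_k(z)\sim\big(\prod_{l=0}^{k-1}w_l\big)z^{k+1}$ show $\xi_k(z)\ne0$ for all large $k$, whence $\xi(z)\ne0$.

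Next, the identity (\ref{eq:ChristDarb_form}). Writing $V_k:=\xi_k\xi_{k+1}'-\xi_k'\xi_{k+1}$ with derivatives taken in $z$, I would differentiate the recurrence in $z$, combine the differentiated and undifferentiated versions in the usual Wronskian fashion (multiply one by $\xi_{k+1}'$, the other by $\xi_{k+1}$, and subtract), and then use the recurrence once more to eliminate the mixed term $\xi_{k+1}(w_k\xi_k+w_{k+1}\xi_{k+2})$. This collapses to the telescoping relation $w_kz\,V_k-w_{k+1}z\,V_{k+1}=z^{-1}\xi_{k+1}^{\,2}$ for $k\ge-1$. Summing from $k=-1$ to $N$ and using $w_{-1}=1$ leaves
\[
z\,V_{-1}-w_{N+1}z\,V_{N+1}=\frac1z\sum_{j=0}^{N+1}\xi_j^{\,2},
\]
and since $V_{-1}=\xi_{-1}\xi_0'-\xi_{-1}'\xi_0$, letting $N\to\infty$ produces exactly (\ref{eq:ChristDarb_form}) once the boundary term is shown to vanish. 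The positivity (\ref{eq:sign_x0_x1}) is then immediate: for real $\lambda,w$ and real $z$ every $\xi_k(z)$ is real, so the left-hand side of (\ref{eq:ChristDarb_form}) equals $z^{-2}\|\xi(z)\|^2$, which is strictly positive because $\xi(z)\ne0$.

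The main obstacle is the control of the boundary term, i.e. showing $w_{N+1}z\,V_{N+1}\to0$; this needs decay not only of $\xi_k$ but also of the derivatives $\xi_k'$. I would handle it by upgrading the convergence $\mathcal{G}_{J^{(k+1)}}\to1$ to be locally uniform in $z$ on compact subsets of $\mathbb{C}\setminus\{0\}$ (the estimates of the second paragraph, made uniform via (\ref{eq:F_ineq_exp}), are uniform there), so that Weierstrass' theorem yields $\tfrac{d}{dz}\mathcal{G}_{J^{(k+1)}}\to0$ locally uniformly and hence a uniform bound on these derivatives near the given $z$. This gives $|\xi_k'(z)|\lesssim(k+1)\big|\prod_{l=0}^{k-1}w_l\big||z|^{k}$, and combined with the super-geometric decay of $\prod_{l=0}^{k-1}w_l$ it forces $w_{N+1}z\,V_{N+1}\to0$, completing the argument.
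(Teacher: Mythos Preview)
Your proof is correct, but your route to the identity~(\ref{eq:ChristDarb_form}) differs from the paper's. The paper keeps two independent variables $x$ and $z$: from the recurrence it derives the two-variable Christoffel--Darboux relation
\[
(z^{-1}-x^{-1})\,\xi_k(z)\xi_k(x)=W_k(x,z)-W_{k-1}(x,z),\qquad
W_k(x,z)=w_k\big(\xi_{k+1}(z)\xi_k(x)-\xi_{k+1}(x)\xi_k(z)\big),
\]
sums it over $k$, and only then passes to the confluent limit $x\to z$ to produce the derivative $\xi_{-1}\xi_0'-\xi_{-1}'\xi_0$. The point of this ordering is that the boundary term $W_N(x,z)$ dies as $N\to\infty$ \emph{for free}: it involves only the values $\xi_N$, $\xi_{N+1}$, which tend to $0$ by the $\ell^2$ statement already proved, and no information on derivatives of $\xi_k$ is needed.

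You instead differentiate the recurrence in $z$ first and telescope the Wronskian $V_k=\xi_k\xi_{k+1}'-\xi_k'\xi_{k+1}$. That is equally legitimate (and your telescoping identity $w_kz\,V_k-w_{k+1}z\,V_{k+1}=z^{-1}\xi_{k+1}^2$ is correct), but the price is that the tail term $w_{N+1}V_{N+1}$ now contains the derivatives $\xi_{N+1}'$, $\xi_{N+2}'$, which forces you to upgrade the convergence $\mathcal G_{J^{(k)}}\to1$ to be locally uniform and then invoke Weierstrass. This works, and your justification via the estimate~(\ref{eq:F_ineq_exp}) is sound, but the paper's two-variable detour avoids that extra layer entirely.
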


\begin{proof}
  First, choose $N\in\mathbb{Z}_{+}$ so that $z^{-1}\neq\lambda_{k}$
  for all $k>N$. This is clearly possible since
  $\lambda_{n}\rightarrow0$ as $n\rightarrow\infty$. Then we have,
  referring to (\ref{eq:F_ineq_exp}) and (\ref{eq:def_G_J}),
  \[
  |\mathcal{G}_{J^{(k)}}(z)|
  \leq \exp\!\left(\sum_{j=N+1}^{\infty}|z||\lambda_{j}|
    + \sum_{j=N+1}^{\infty}\frac{|z|^{2}\,|w_{j}|^{\,2}}
    {|(1-z\lambda_{j})(1-z\lambda_{j+1})|}\right)\ \ \text{for}\ k>N.
  \]
  Observing that $w_{n}\rightarrow0$ as $n\rightarrow\infty$, one
  concludes that there exists a constant $C>0$ such that
  \[
  |z|^{k+1}\,\prod_{l=0}^{k-1}w_{l}\leq C\,2^{-k}\ \ \text{for}\ k>N.
  \]
  These estimates obviously imply the square summability of the vector
  $\xi(z)$.

  Second, with the aid of (\ref{eq:F_T_recur}) one verifies that for
  all $z\neq0$ and $k\in\mathbb{Z}_{+}$,
  \[
  w_{k-1}\xi_{k-1}(z)+(\lambda_{k}-z^{-1})\xi_{k}(z)+w_{k}\xi_{k+1}(z)=0
  \]
  where we put $w_{-1}:=1$. From here one deduces that the equality
  \begin{equation}
    (z^{-1}-x^{-1})\xi_{k}(z)\xi_{k}(x)=W_{k}(x,z)-W_{k-1}(x,z),
    \label{eq:toChristDarb_form}
  \end{equation}
  with
  \[
  W_{k}(x,z)
  = w_{k}\left(\xi_{k+1}(z)\xi_{k}(x)-\xi_{k+1}(x)\xi_{k}(z)\right),
  \]
  holds for all $k\in\mathbb{Z}_{+}$. Now one can derive
  (\ref{eq:ChristDarb_form}) from (\ref{eq:toChristDarb_form}) in a
  routine way.

  Finally, observe that the first equality in (\ref{eq:lim_F_T_n})
  implies the limit
  \[
  \lim_{k\rightarrow\infty}\mathcal{G}_{J^{(k)}}(z) = 1.
  \]
  Referring to (\ref{eq:psi_k_def}) this means $\xi_{k}(z)\neq0$ for
  all sufficiently large $k$. \end{proof}

\begin{proposition}\label{prop:psi_eigenvector}
  Let $\lambda\in\ell^{1}(\mathbb{Z}_{+})$ be real,
  $w\in\ell^{2}(\mathbb{Z}_{+})$ be positive and $z\neq0$. If $z^{-1}$
  is an eigenvalue of the Jacobi operator $J$ given in
  (\ref{eq:Jacobi_J}) then the vector (\ref{eq:vector_xi}) is a
  corresponding eigenvector.
\end{proposition}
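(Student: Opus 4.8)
The plan is to show directly that the square-summable vector $\xi(z)$ from (\ref{eq:vector_xi}) solves the eigenvalue equation $J\xi(z)=z^{-1}\xi(z)$ componentwise. Lemma~\ref{lem:vector_xi} already supplies two of the three ingredients I need: the vector $\xi(z)$ lies in $\ell^{2}(\mathbb{Z}_{+})$ and is nonzero. What remains is to verify that it is annihilated by $J-z^{-1}$.

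The decisive ingredient is also established within the proof of Lemma~\ref{lem:vector_xi}, namely the three-term relation
\[
w_{k-1}\xi_{k-1}(z)+(\lambda_{k}-z^{-1})\xi_{k}(z)+w_{k}\xi_{k+1}(z)=0,
\]
valid for every $k\in\mathbb{Z}_{+}$ with the convention $w_{-1}:=1$. First I would observe that for each index $k\geq1$ this relation is exactly the $k$-th row of $(J-z^{-1})\xi(z)=0$, with $J$ as in (\ref{eq:Jacobi_J}). Thus the eigenvalue equation is automatically satisfied at all interior components, and the only thing that can obstruct $\xi(z)$ from being an eigenvector is the boundary component $k=0$.

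The heart of the argument is therefore the treatment of this boundary term. At $k=0$ the three-term relation reads $\xi_{-1}(z)+(\lambda_{0}-z^{-1})\xi_{0}(z)+w_{0}\xi_{1}(z)=0$, whereas the zeroth row of $(J-z^{-1})\xi(z)=0$ is $(\lambda_{0}-z^{-1})\xi_{0}(z)+w_{0}\xi_{1}(z)=0$. The two coincide precisely when $\xi_{-1}(z)=\mathcal{G}_{J}(z)$ vanishes. Here the hypothesis enters: since $z\neq0$, the number $z^{-1}$ is a nonzero eigenvalue of $J$, and by the spectral description (\ref{eq:specJ_GJ}) every nonzero point of $\spec(J)$ is of the form $z^{-1}$ with $\mathcal{G}_{J}(z)=0$. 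Hence $\mathcal{G}_{J}(z)=0$, the extra summand $\xi_{-1}(z)$ drops out, and the zeroth component of the eigenvalue equation holds as well.

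Putting these together, $\xi(z)$ satisfies $(J-z^{-1})\xi(z)=0$ in every component; since $J$ is bounded (indeed compact under the present hypotheses) and $\xi(z)\in\ell^{2}(\mathbb{Z}_{+})$, this is a genuine identity in $\ell^{2}$, and as $\xi(z)\neq0$ the vector is the sought eigenvector. I do not anticipate a serious obstacle: the whole argument is an assembly of facts already recorded, and the only subtle point is recognizing that the single boundary summand $\xi_{-1}(z)$ at the top of the semi-infinite recurrence is exactly the quantity forced to vanish by the eigenvalue hypothesis through (\ref{eq:specJ_GJ}).
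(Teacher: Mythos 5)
Your proof is correct and takes essentially the same route as the paper's: the hypothesis plus (\ref{eq:specJ_GJ}) forces $\mathcal{G}_{J}(z)=\xi_{-1}(z)=0$, the three-term relation coming from (\ref{eq:F_T_recur}) then shows $\xi(z)$ formally solves $(J-z^{-1})\xi(z)=0$ (your careful handling of the $k=0$ boundary row is exactly where the vanishing of $\xi_{-1}(z)$ enters), and Lemma~\ref{lem:vector_xi} supplies $\xi(z)\in\ell^{2}(\mathbb{Z}_{+})$ and $\xi(z)\neq0$. The paper additionally notes $\xi_{0}(z)\neq0$, but that observation is not needed for the proposition as stated.
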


\begin{proof}
  As recalled in (\ref{eq:specJ_GJ}), $z^{-1}$ is an eigenvalue of $J$
  iff $\mathcal{G}_{J}(z)\equiv\xi_{-1}(z)=0$. Then one readily
  verifies, with the aid of (\ref{eq:F_T_recur}), that $\xi(z)$ is a
  formal solution of the eigenvalue equation $(J-z^{-1})\xi(z)=0$. By
  Lemma~\ref{lem:vector_xi}, $\xi(z)\neq0$. It is even true that
  $\xi_{0}(z)\neq0$. Indeed, if $\xi_{-1}(z)=\xi_{0}(z)=0$ then, by
  the recurrence, $\xi_{k}(z)=0$ for all $k\in\mathbb{Z}_{+}$, a
  contradiction.  Moreover, Lemma~\ref{lem:vector_xi} also tells us
  that $\xi(z)\in\ell^{2}(\mathbb{Z}_{+})$.
\end{proof}

\begin{proposition}\label{prop:G_J_simple_zeros}
  Let $\lambda\in\ell^{1}(\mathbb{Z}_{+})$ be real and
  $w\in\ell^{2}(\mathbb{Z}_{+})$ be positive. Then the zeros of the
  function $\mathcal{G}_{J}$ are all real and simple, and form a
  countable subset of $\mathbb{R}\setminus\{0\}$ with no finite
  accumulation points. Furthermore, the functions $\mathcal{G}_{J}$
  and $\mathcal{G}_{J^{(1)}}$ have no common zeros, and the zeros of
  the same sign of $\mathcal{G}_{J}$ and $\mathcal{G}_{J^{(1)}}$
  mutually separate each other, i.e. between any two consecutive zeros
  of $\mathcal{G}_{J}$ which have the same sign there is a zero of
  $\mathcal{G}_{J^{(1)}}$ and vice versa. \end{proposition}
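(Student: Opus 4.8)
The plan is to read off all four assertions from the self-adjointness of $J$ together with the Wronskian-type identity of Lemma~\ref{lem:vector_xi}, keeping in mind that $\xi_{-1}(z)=\mathcal{G}_{J}(z)$ and, by (\ref{eq:psi_k_def}), $\xi_{0}(z)=z\,\mathcal{G}_{J^{(1)}}(z)$. First I would settle the structural claims. Since $\lambda\in\ell^{1}(\mathbb{Z}_{+})$ is real and $w\in\ell^{2}(\mathbb{Z}_{+})$ is positive, $J$ is compact and self-adjoint, so its spectrum is real and, away from $0$, discrete with $0$ as the only possible accumulation point. By (\ref{eq:specJ_GJ}) the zeros of $\mathcal{G}_{J}$ are precisely the reciprocals of the nonzero eigenvalues of $J$; hence they are real, lie in $\mathbb{R}\setminus\{0\}$ (also because $\mathcal{G}_{J}(0)=1$ by (\ref{eq:def_G_J})), and form a countable set whose only possible accumulation point is $\infty$. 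The same reasoning applies verbatim to $J^{(1)}$, so $\mathcal{G}_{J^{(1)}}$ likewise has only real, nonzero, discrete zeros, which is what makes ``consecutive zeros of the same sign'' meaningful.

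Next I would obtain simplicity and the absence of common zeros at once from the strict positivity (\ref{eq:sign_x0_x1}). Let $z_{0}$ be a zero of $\mathcal{G}_{J}=\xi_{-1}$; then $z_{0}\neq0$ and (\ref{eq:sign_x0_x1}) reduces to $-\xi_{-1}'(z_{0})\,\xi_{0}(z_{0})>0$. In particular $\xi_{-1}'(z_{0})\neq0$, i.e.\ $\mathcal{G}_{J}'(z_{0})\neq0$, so every zero of $\mathcal{G}_{J}$ is simple; and $\xi_{0}(z_{0})\neq0$, i.e.\ $\mathcal{G}_{J^{(1)}}(z_{0})\neq0$, so $\mathcal{G}_{J}$ and $\mathcal{G}_{J^{(1)}}$ share no zero. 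Evaluating (\ref{eq:sign_x0_x1}) instead at a zero $z_{1}$ of $\xi_{0}$ (where necessarily $\xi_{-1}(z_{1})\neq0$) gives $\xi_{0}'(z_{1})\neq0$, so the zeros of $\mathcal{G}_{J^{(1)}}$ are simple as well.

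For the separation property I would study the ratio $g(z):=\xi_{0}(z)/\xi_{-1}(z)=z\,\mathcal{G}_{J^{(1)}}(z)/\mathcal{G}_{J}(z)$. Writing $W(z):=\xi_{-1}(z)\xi_{0}'(z)-\xi_{-1}'(z)\xi_{0}(z)$, identities (\ref{eq:ChristDarb_form}) and (\ref{eq:sign_x0_x1}) give $W(z)=z^{-2}\sum_{k\geq0}\xi_{k}(z)^{2}>0$ for real $z\neq0$, whence $g'(z)=W(z)/\xi_{-1}(z)^{2}>0$ wherever $g$ is finite. Thus $g$ is strictly increasing on any subinterval of $(0,\infty)$ or of $(-\infty,0)$ free of poles. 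The poles of $g$ are the zeros of $\mathcal{G}_{J}$, all simple, and at each of them $\xi_{0}\neq0$, so $g$ passes from $+\infty$ to $-\infty$ across every pole. Consequently, between two consecutive zeros of $\mathcal{G}_{J}$ of the same sign, $g$ increases from $-\infty$ to $+\infty$ and hence vanishes exactly once; a zero of $g$ is a zero of $\xi_{0}$, and since the point is nonzero it is a zero of $\mathcal{G}_{J^{(1)}}$. Running the same argument with the strictly decreasing function $1/g=\xi_{-1}/\xi_{0}$, whose poles are the (simple) zeros of $\mathcal{G}_{J^{(1)}}$, produces a zero of $\mathcal{G}_{J}$ between any two consecutive same-sign zeros of $\mathcal{G}_{J^{(1)}}$, establishing mutual separation.

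The one step needing care, and the reason the claim is restricted to zeros of the same sign, is that the positivity of $W$, and hence the monotonicity of $g$, is guaranteed by (\ref{eq:sign_x0_x1}) only on $\mathbb{R}\setminus\{0\}$. The monotonicity argument must therefore be carried out on the two half-lines $(0,\infty)$ and $(-\infty,0)$ separately, and nothing is asserted about an interval straddling the origin. The remaining ingredients are routine: the asymptotics at the poles used above rely only on the simplicity of the zeros of $\xi_{-1}$ and $\xi_{0}$ proved in the second step, so no further estimates are required.
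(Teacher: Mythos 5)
Your proposal is correct and follows essentially the same route as the paper: both reduce the structural claims (real, countable, nonzero, no finite accumulation points) to (\ref{eq:specJ_GJ}) together with compactness and self-adjointness of $J$, and both extract simplicity, the absence of common zeros, and the interlacing from the positivity (\ref{eq:sign_x0_x1}) of Lemma~\ref{lem:vector_xi}, working with $\xi_{-1}=\mathcal{G}_{J}$ and $\xi_{0}(z)=z\,\mathcal{G}_{J^{(1)}}(z)$. The only difference is cosmetic and occurs in the last step: the paper applies the intermediate value theorem to $\xi_{0}$ directly, using that $\xi_{-1}'$ alternates in sign at consecutive simple zeros, whereas you exploit monotonicity of the ratio $\xi_{0}/\xi_{-1}$, which in addition shows the interlacing zero is unique.
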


\begin{proof}
  The first part of the proposition follows from (\ref{eq:specJ_GJ}).
  In fact, all zeros of $\mathcal{G}_{J}$ are surely real since $J$ is
  a Hermitian operator in $\ell^{2}(\mathbb{Z}_{+})$. Moreover, $J$ is
  compact and all its eigenvalues are simple. Therefore the set of
  reciprocal values of nonzero eigenvalues of $J$ is countable and has
  no finite accumulation points.

  Thus we know that the zeros of $\mathcal{G}_{J}$ and
  $\mathcal{G}_{J^{(1)}}$ are all located in
  $\mathbb{R}\setminus\{0\}$ and $\xi_{-1}(z)=\mathcal{G}_{J}(z)$,
  $\xi_{0}(z)=z\mathcal{G}_{J^{(1)}}(z)$. Hence, as far as the zeros
  are concerned and we are separated from the origin, we can speak
  about $\xi_{-1}$ and $\xi_{0}$ instead of $\mathcal{G}_{J}$ and
  $\mathcal{G}_{J^{(1)}}$, respectively. The remainder of the
  proposition can be deduced from (\ref{eq:sign_x0_x1}) in a usual
  way. Suppose a zero of $\xi_{-1}$, called $z$, is not simple. Then
  $\xi_{-1}(z)=\xi'_{-1}(z)=0$ which leads to a contradiction with
  (\ref{eq:sign_x0_x1}). From (\ref{eq:sign_x0_x1}) it is immediately
  seen, too, that $\xi_{-1}$ and $\xi_{0}$ have no common zeros in
  $\mathbb{R}\setminus\{0\}$. Furthermore, suppose $z_{1}$ and
  {$z_{2}$} are two consecutive zeros of $\xi_{-1}$ of the same sign.
  Since these zeros are simple, the numbers $\xi_{-1}'(z_{1})$ and
  $\xi_{-1}'(z_{2})$ differ in sign.  From (\ref{eq:sign_x0_x1}) one
  deduces that $\xi_{0}(z_{1})$ and $\xi_{0}(z_{2})$ must differ in
  sign as well. Consequently, there is at least one zero of $\xi_{0}$
  lying between $z_{1}$ and $z_{2}$.  An entirely analogous argument
  applies if the roles of $\xi_{-1}$ and $\xi_{0}$ are interchanged.
\end{proof}

\section{Lommel polynomials}

\subsection{Basic properties and the orthogonality relation \label{subsec:Lommel_basic}}

In this section we deal with the Lommel polynomials as one of the
simplest and most interesting examples to demonstrate the general
results derived in Section~\ref{sec:OPs}. This is done with the
perspective of approaching our main goal in this paper, namely a
generalization of the Lommel polynomials established in the next
section. Let us note that although the Lommel polynomials are
expressible in terms of hypergeometric series, they do not fit into
Askey's scheme of hypergeometric orthogonal polynomials
\cite{Koekoek_etal}.

Recall that the Lommel polynomials arise in the theory of Bessel
function (see, for instance, \cite[?? 9.6-9.73]{Watson} or
\cite[Chp.~VII]{Erdelyi_etal_II}).  They can be written explicitly in
the form
\begin{equation}
  R_{n,\nu}(x) = \sum_{k=0}^{[n/2]}(-1)^{k}\binom{n-k}{k}
  \frac{\Gamma(\nu+n-k)}{\Gamma(\nu+k)}
  \left(\frac{2}{x}\right)^{\! n-2k}
  \label{eq:Lommel_explicit}
\end{equation}
where $n\in\mathbb{Z}_{+}$, $\nu\in\mathbb{C}$,
$-\nu\notin\mathbb{Z}_{+}$ and $x\in\mathbb{C}\setminus\{0\}$. Here we
stick to the traditional terminology though, obviously, $R_{n,\nu}(x)$
is a polynomial in the variable $x^{-1}$ rather than in $x$.
Proceeding by induction in $n\in\mathbb{Z}_{+}$ one easily verifies
the identity
\begin{equation}
  R_{n,\nu}(x) = \left(\frac{2}{x}\right)^{\! n}
  \frac{\Gamma(\nu+n)}{\Gamma(\nu)}\,\mathfrak{F}\!
  \left(\left\{ \frac{x}{2(\nu+k)}\right\} _{k=0}^{n-1}\right)\!.
  \label{eq:lommel_F}
\end{equation}
As is well known, the Lommel polynomials are directly related to Bessel
functions,
\begin{eqnarray*}
  R_{n,\nu}(x) & = & \frac{\pi x}{2}
  \left(Y_{-1+\nu}(x)J_{n+\nu}(x)-J_{-1+\nu}(x)Y_{n+\nu}(x)\right)\\
  &  & \frac{\pi x}{2\sin(\pi\nu)}\left(J_{1-\nu}(x)
    J_{n+\nu}(x)+(-1)^{n}J_{-1+\nu}(x)J_{-n-\nu}(x)\right).
\end{eqnarray*}
From here and relation (\ref{eq:Bessel_recur}) below it is seen that
the Lommel polynomials obey the recurrence
\begin{equation}
  R_{n+1,\nu}(x) = \frac{2\,(n+\nu)}{x}\, R_{n,\nu}(x)
  -R_{n-1,\nu}(x),\ \ n\in\mathbb{Z}_{+},
  \label{eq:Lommel_recur}
\end{equation}
with the initial conditions $R_{-1,\nu}(x)=0$, $R_{0,\nu}(x)=1$.

The original meaning of the Lommel polynomials reveals the formula
\begin{equation}
  J_{\nu+n}(x) = R_{n,\nu}(x)J_{\nu}(x)-R_{n-1,\nu+1}(x)
  J_{\nu-1}(x)\ \ \text{for}\ n\in\mathbb{Z}_{+}.
  \label{eq:Lommel_in_lincomb_Bessel}
\end{equation}
As firstly observed by Lommel in 1871,
(\ref{eq:Lommel_in_lincomb_Bessel}) can be obtained by iterating the
basic recurrence relation for the Bessel functions, namely
\begin{equation}
  J_{\nu+1}(x) = \frac{2\nu}{x}\, J_{\nu}(x)-J_{\nu-1}(x).
  \label{eq:Bessel_recur}
\end{equation}
Let us remark that (\ref{eq:Lommel_in_lincomb_Bessel}) immediately
follows from (\ref{eq:lincomb_Fx_FTx}), (\ref{eq:lommel_F}) and
formula (\ref{eq:BesselJ_rel_F}) below.

The orthogonality relation for Lommel polynomials is known explicitly
and is expressed in terms of the zeros of the Bessel function of order
$\nu-1$ as explained, for instance, in
\cite{Dickinson54,Dickinson_etal},
see also \cite[Chp. VI, S 6]{Chihara} and \cite{Ismail}. This relation
can also be rederived as a corollary of
Proposition~\ref{prop:OGrel_Gfunc}.  For $\nu>-1$ and
$n\in\mathbb{Z}_{+}$, set temporarily
\[
\lambda_{n} = 0\quad\mbox{ and }\quad w_{n}
= 1/\sqrt{(\nu+n+1)(\nu+n+2)}\,.
\]
Then the corresponding Jacobi operator $J$ is compact, self-adjoint
and $0$ is not an eigenvalue. In fact, invertibility of $J$ can be
verified straightforwardly by solving the formal eigenvalue equation
for $0$. Referring to (\ref{eq:BesselJ_rel_F}), the regularized
characteristic function of $J$ equals
\[
\mathcal{G}_{J}(z) = \mathcal{F}_{J}(z^{-1})
= \Gamma(\nu+1)\, z^{-\nu}J_{\nu}(2z).
\]
Consequently, the support of the measure of orthogonality turns out to
coincide with the zero set of $J_{\nu}(z)$. Remember that
$x^{-\nu}J_{\nu}(x)$ is an even function. Let $j_{k,\nu}$ stand for
the $k$-th positive zero of $J_{\nu}(x)$ and put
$j_{-k,\nu}=-j_{k,\nu}$ for $k\in\mathbb{N}$.
Proposition~\ref{prop:OGrel_Gfunc} then tells us that the
orthogonality relation takes the form
\[
-2(\nu+1)\sum_{k\in\mathbb{Z}
  \setminus\{0\}}\frac{J_{\nu+1}(j_{k,\nu})}
{j_{k,\nu}^{\,2}\,J_{\nu}'(j_{k,\nu})}\,
P_{m}\!\left(\frac{2}{j_{k,\nu}}\right)
P_{n}\!\left(\frac{2}{j_{k,\nu}}\right) = \delta_{mn}
\]
where $J_{\nu}'(x)$ denotes the partial derivative of $J_{\nu}(x)$
with respect to $x$.

Furthermore, (\ref{eq:OPs_P_F}) and (\ref{eq:lommel_F}) imply
\begin{equation}
  R_{n,\nu+1}(x) = \sqrt{\frac{\nu+1}{\nu+n+1}}\,
  P_{n}\!\left(\frac{2}{x}\right).
  \label{eq:Lommel_R_eq_Pn}
\end{equation}
Using the identity
\[
\partial_{x}J_{\nu}(x) = \frac{\nu}{x}\, J_{\nu}(x)-J_{\nu+1}(x),
\]
the orthogonality relation simplifies to the well known formula
\begin{equation}
  \sum_{k\in\mathbb{Z}\setminus\{0\}}j_{k,\nu}^{\,-2}\,
  R_{n,\nu+1}(j_{k,\nu})R_{m,\nu+1}(j_{k,\nu})
  = \frac{1}{2(n+\nu+1)}\,\delta_{mn},
  \label{eq:OGrel_Lommel}
\end{equation}
valid for $\nu>-1$ and $m,n\in\mathbb{Z}_{+}$.

\subsection{Lommel Polynomials in the variable $\nu$}

The Lommel polynomials can also be dealt with as polynomials in the
parameter $\nu$. Such polynomials are also orthogonal with the measure
of orthogonality supported by the zeros of a Bessel function of the
first kind regarded as a function of the order.

Let us consider a sequence of polynomials in the variable $\nu$ and
depending on a parameter $u\neq0$, $\{Q_{n}(u;\nu)\}_{n=0}^{\infty}$,
determined by the recurrence
\begin{equation}
  uQ_{n-1}(u;\nu)-nQ_{n}(u;\nu)+uQ_{n+1}(u;\nu)
  = \nu Q_{n}(u;\nu),\quad n\in\mathbb{Z}_{+},
\end{equation}
with the initial conditions $Q_{-1}(u;\nu)=0$, $Q_{0}(u;\nu)=1$.
According to (\ref{eq:OPs_P_F}),
\begin{equation}
  Q_{n}(u,\nu)
  = u^{-n}\,\frac{\Gamma(\nu+n)}{\Gamma(\nu)}\,\mathfrak{F}\!
  \left(\left\{ \frac{u}{\nu+k}\right\} _{k=0}^{n-1}\right)
  \ \ \text{for}\ n\in\mathbb{Z}_{+}.
\end{equation}
Comparing the last formula with (\ref{eq:lommel_F}) one observes that
\[
Q_{n}(u,\nu) = R_{n,\nu}(2u),\ \forall n\in\mathbb{Z}_{+}.
\]

The Bessel function $J_{\nu}(x)$ regarded as a function of $\nu$ has
infinitely many simple real zeros which are all isolated provided that
$x>0$, see \cite[Subsection 4.3]{StampachStovicek13a}. Below we denote
the zeros of $J_{\nu-1}(2u)$ by $\theta_{n}=\theta_{n}(u)$,
$n\in\mathbb{N}$, and restrict ourselves to the case $u>0$ since
$\theta_{n}(-u)=\theta_{n}(u)$.

The Jacobi matrix $J$ corresponding to this case, i.e. $J$ with the
diagonal $\lambda_{n}=-n$ and the weights $w_{n}=u$,
$n\in\mathbb{Z}_{+}$, is an unbounded self-adjoint operator with a
discrete spectrum (see \cite{StampachStovicek13a}). Hence the
orthogonality measure for $\{Q_{n}(u;\nu)\}$ has the form stated in
Remark~\ref{rem:isol_eigen_only}.  Thus, using (\ref{eq:jumps}) and
equation (\ref{eq:BesselJ_rel_F}) below, one arrives at the
orthogonality relation
\begin{equation}
  \sum_{k=1}^{\infty}\frac{J_{\theta_{k}}(2u)}
  {u\left(\partial_{z}\big|_{z=\theta_{k}}J_{z-1}(2u)\right)}\,
  R_{n,\theta_{k}}(2u)R_{m,\theta_{k}}(2u)
  = \delta_{mn},\ m,n\in\mathbb{Z}_{+}.
\end{equation}

Concerning the history, let us remark that initially this was
Dickinson in 1958 who proposed the problem of seeking a construction
of the measure of orthogonality for the Lommel polynomials in the
variable $\nu$ \cite{Dickinson58}. Ten years later, Maki described
such a construction in \cite{Maki}.

\section{A new class of orthogonal polynomials}

\subsection{Characteristic functions of the Jacobi matrices $J_{L}$ and $\tilde{J}_{L}$}

In this section we work with matrices $J_{L}$ and $\tilde{J_{L}}$
defined in (\ref{eq:Jacobi_mat_L}), (\ref{eq:lambda_w_coulomb}) and
(\ref{eq:Jacobi_mat_L_tilde}), (\ref{eq:lambda_w_coulomb_tilde}),
respectively. To have the weight sequence $w$ positive and the matrix
Hermitian, we assume, in the case of $J_{L}$, that $-1\neq L>-3/2$ if
$\eta\in\mathbb{R}\setminus\{0\}$, and $L>-3/2$ if $\eta=0$.
Similarly, in the case of $\tilde{J}_{L}$ we assume $L>-1/2$ and
$\eta\in\mathbb{R}$.

Recall that the regular and irregular Coulomb wave functions,
$F_{L}(\eta,\rho)$ and $G_{L}(\eta,\rho)$, are two linearly
independent solutions of the second-order differential equation
\begin{equation}
  \frac{d^{2}u}{d\rho^{2}}+\left[1-\frac{2\eta}{\rho}
    - \frac{L(L+1)}{\rho^{2}}\right]u=0,
  \label{eq:ODR_Coulomb}
\end{equation}
see, for instance, \cite[Chp. 14]{AbramowitzStegun}. One has the
Wronskian formula (see \cite[14.2.5]{AbramowitzStegun})
\begin{equation}
  F_{L-1}(\eta,\rho)G_{L}(\eta,\rho)
  - F_{L}(\eta,\rho)G_{L-1}(\eta,\rho)
  = \frac{L}{\sqrt{L^{2}+\eta^{2}}}\,.
  \label{eq:wronsk_F_G}
\end{equation}
Furthermore, the function $F_{L}(\eta,\rho)$ admits the decomposition
\cite[14.1.3 and 14.1.7]{AbramowitzStegun}
\begin{equation}
  F_{L}(\eta,\rho)
  = C_{L}(\eta)\rho^{L+1}\phi_{L}(\eta,\rho)
  \label{eq:F_L_decomp}
\end{equation}
where
\[
C_{L}(\eta) := \sqrt{\frac{2\pi\eta}{e^{2\pi\eta}-1}}\,
\frac{\sqrt{(1+\eta^{2})(4+\eta^{2})
    \dots(L^{2}+\eta^{2})}}{(2L+1)!!\, L!}
\]
and
\begin{equation}
  \phi_{L}(\eta,\rho) := e^{-i\rho}\,
  {}_{1}F_{1}(L+1-i\eta,2L+2,2i\rho).
  \label{eq:phi_rel_1F1}
\end{equation}
For $L$ not an integer, $C_{L}(\eta)$ is to be understood as
\begin{equation}
  C_{L}(\eta) = \frac{2^{L}e^{-\pi\eta/2}\,|\Gamma(L+1+i\eta)|}
  {\Gamma(2L+2)}\,.
  \label{eq:def_CL}
\end{equation}
In \cite{StampachStovicek13b}, the characteristic function for the
matrix $J_{L}$ has been derived. If expressed in terms of
$\mathcal{G}_{J_{L}}$, as defined in (\ref{eq:def_G_J}), the formula
simply reads
\begin{equation}
  \mathcal{G}_{J_{L}}(\rho)
  = \left(\prod_{k=L+1}^{\infty}(1-\lambda_{k}\rho)\right)
  \mathfrak{F}\!\left(\left\{ \frac{\gamma_{k}^{\,2}\rho}
      {1-\lambda_{k}\rho}\right\} _{k=L+1}^{\infty}\right)
  = \phi_{L}(\eta,\rho).
  \label{eq:G_J_L_eq_phi_L}
\end{equation}

For the particular values of parameters, $L=\nu-1/2$ and $\eta=0$, one
gets
\begin{equation}
  \mathfrak{F}\!
  \left(\left\{ \frac{\rho}{2(\nu+k)}\right\} _{k=1}^{\infty}\right)
 = \phi_{\nu-1/2}(0,\rho).
 \label{eq:char_fction_Coulomb2Bessel}
\end{equation}
It is also known that, see (\ref{eq:phi_rel_1F1}) and Eqs. 14.6.6 and
13.6.1 in \cite{AbramowitzStegun},
\begin{eqnarray}
  F_{\nu-1/2}(0,\rho) & = & \sqrt{\frac{\pi\rho}{2}}\, J_{\nu}(\rho),
  \label{eq:F_L_part}\\
  \phi_{\nu-1/2}(0,\rho) & = & e^{-i\rho}\,
  {}_{1}F_{1}(\nu+1/2,2\nu+1,2i\rho)
  \,=\,\Gamma(\nu+1)\left(\frac{2}{\rho}\right)^{\!\nu}
  J_{\nu}(\rho).
  \label{eq:phi_part}
\end{eqnarray}
(\ref{eq:char_fction_Coulomb2Bessel}) jointly with (\ref{eq:phi_part})
imply
\begin{equation}
  \mathfrak{F}\!\left(\left\{
      \frac{\rho}{\nu+k}\right\} _{k=1}^{\infty}\right)
  = \Gamma(\nu+1)\,\rho^{-\nu}J_{\nu}(2\rho).
  \label{eq:BesselJ_rel_F}
\end{equation}
The last formula has already been observed in
\cite{StampachStovicek11} and it holds for any $\nu\notin-\mathbb{N}$
and $\rho\in\mathbb{C}$.

Using the recurrence (\ref{eq:F_T_recur}) for $\mathfrak{F}$, one can
also obtain the characteristic function for $\tilde{J}_{L}$,
\begin{equation}
  \mathcal{F}_{\tilde{J}_{L}}(\rho^{-1})
  = \mathcal{F}_{J_{L}}(\rho^{-1})
  - \frac{\tilde{w}_{L}^{\,2}}{(\rho^{-1}
    - \tilde{\lambda}_{L})(\rho^{-1}-\lambda_{L+1})}\,
  \mathcal{F}_{J_{L+1}}(\rho^{-1}).
  \label{eq:F_J_tilde}
\end{equation}
Be reminded that $\phi_{L}(\eta,\rho)$ obeys the equations
\begin{eqnarray}
  \partial_{\rho}\phi_{L+1}(\eta,\rho)
  & = & \frac{2L+3}{\rho}\,\phi_{L}(\eta,\rho)
  - \left(\frac{2L+3}{\rho}+\frac{\eta}{L+1}\right)
  \phi_{L+1}(\eta,\rho),
  \label{eq:der_phi_L+1_eq_}\\
  \partial_{\rho}\phi_{L}(\eta,\rho)
  & = & \frac{\eta}{L+1}\,\phi_{L}(\eta,\rho)
  - \frac{\rho}{2L+3}\left(1+\frac{\eta^{2}}{(L+1)^{2}}\right)
  \phi_{L+1}(\eta,\rho),
  \label{eq:der_phi_L_eq_lc_phi_L_phi_L+1}
\end{eqnarray}
as it follows from \cite[14.2.1 and 14.2.2]{AbramowitzStegun}. A
straightforward computation based on (\ref{eq:G_J_L_eq_phi_L}),
(\ref{eq:F_J_tilde}) and (\ref{eq:der_phi_L_eq_lc_phi_L_phi_L+1})
yields
\begin{equation}
  \left(1+\frac{\eta\rho}{(L+1)^{2}}\right)
  \left(\prod_{n=L+1}^{\infty}\!
    \left(1+\frac{\eta\rho}{n(n+1)}\right)\right)
  \mathcal{F}_{\tilde{J}_{L}}(\rho^{-1})
  = \phi_{L}(\eta,\rho)+\frac{\rho}{L+1}\,
  \partial_{\rho}\phi_{L}(\eta,\rho).
\end{equation}
In view of (\ref{eq:F_L_decomp}), this can be rewritten as
\begin{equation}
  \mathcal{G}_{\tilde{J}_{L}}(\rho)
  = \phi_{L}(\eta,\rho)+\frac{\rho}{L+1}\,
  \partial_{\rho}\phi_{L}(\eta,\rho)
  = \frac{1}{(L+1)C_{L}(\eta)}\,\rho^{-L}\,
  \partial_{\rho}F_{L}(\eta,\rho).
  \label{eq:der_phi_L_eq_der_F_L}
\end{equation}

\subsection{Orthogonal polynomials associated with $F_{L}(\eta,\rho)$ \label{subsec:OPs_F_L}}

Following the general scheme outlined in Section~\ref{sec:OPs} (see
(\ref{eq:recur_OPs})) we denote by
\linebreak
$\{P_{n}^{(L)}(\eta;z)\}_{n=0}^{\infty}$ the sequence of OPs given by
the three-term recurrence
\begin{equation}
  zP_{n}^{(L)}(\eta;z)
  = w_{L+n}P_{n-1}^{(L)}(\eta;z)+\lambda_{L+n+1}
  P_{n}^{(L)}(\eta;z)+w_{L+n+1}P_{n+1}^{(L)}(\eta;z),
  \ \ n\in\mathbb{Z}_{+},
  \label{eq:PL_recur}
\end{equation}
with $P_{-1}^{(L)}(\eta;z)=0$ and $P_{0}^{(L)}(\eta;z)=1$. We again
restrict ourselves to the range of parameters $-1\neq L>-3/2$ if
$\eta\in\mathbb{R}\setminus\{0\}$, and $L>-3/2$ if $\eta=0$. Likewise
the Lommel polynomials, these polynomials are not included in Askey's
scheme \cite{Koekoek_etal}. Further let us denote
\begin{equation}
  R_{n}^{(L)}(\eta;\rho) := P_{n}^{(L)}(\eta;\rho^{-1})
  \label{eq:R_n_L_rel_P_n_L}
\end{equation}
for $\rho\neq0$, $n\in\mathbb{Z}_{+}$. According to
(\ref{eq:OPs_P_F}),
\begin{equation}
  P_{n}^{(L)}(\eta;z) = \left(\prod_{k=1}^{n}\,
    \frac{z-\lambda_{L+k}}{w_{L+k}}\right)
  \mathfrak{F}\!\left(\left\{ \frac{\gamma_{L+k}^{\,2}}
      {z-\lambda_{L+k}}\right\} _{k=1}^{n}\right)\!,
  \ \ n\in\mathbb{Z}_{+}.
  \label{eq:P_n_L_rel_F}
\end{equation}
Alternatively, these polynomials can be expressed in terms of the
Coulomb wave functions.

\begin{proposition}
  For $n\in\mathbb{Z}_{+}$and $\rho\neq0$ one has
  \begin{eqnarray*}
    R_{n}^{(L)}(\eta;\rho)
    & = & \frac{\sqrt{(L+1)^{2}+\eta^{2}}}{L+1}
    \sqrt{\frac{2L+2n+3}{2L+3}}\\
    \noalign{\smallskip}
    && \times \big(F_{L}(\eta,\rho)G_{L+n+1}(\eta,\rho)
    - F_{L+n+1}(\eta,\rho)G_{L}(\eta,\rho)\big).
  \end{eqnarray*}
\end{proposition}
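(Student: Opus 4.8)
The plan is to verify that both sides of the asserted identity, regarded as sequences in $n$, satisfy one and the same three-term recurrence and share two consecutive initial values; since the recurrence is of second order, this forces the two sides to agree for every $n\in\mathbb{Z}_{+}$. Setting $z=\rho^{-1}$ in (\ref{eq:PL_recur}) and using (\ref{eq:R_n_L_rel_P_n_L}), the functions $R_{n}^{(L)}(\eta;\rho)$ obey
\[
  w_{L+n+1}R_{n+1}^{(L)}(\eta;\rho)
  = \bigl(\rho^{-1}-\lambda_{L+n+1}\bigr)R_{n}^{(L)}(\eta;\rho)
  - w_{L+n}R_{n-1}^{(L)}(\eta;\rho),
\]
with $R_{-1}^{(L)}=0$ and $R_{0}^{(L)}=1$, where $\lambda$ and $w$ are given in (\ref{eq:lambda_w_coulomb}).

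Next I would invoke the standard recurrence in the order of the Coulomb functions (\cite[14.2.3]{AbramowitzStegun}), which holds for both the regular and the irregular solutions $u_{M}\in\{F_{M}(\eta,\rho),G_{M}(\eta,\rho)\}$ of (\ref{eq:ODR_Coulomb}) and reads, after dividing by $M(M+1)$,
\[
  (2M+1)\left(\frac{1}{\rho}+\frac{\eta}{M(M+1)}\right)u_{M}
  = \frac{\sqrt{M^{2}+\eta^{2}}}{M}\,u_{M-1}
  + \frac{\sqrt{(M+1)^{2}+\eta^{2}}}{M+1}\,u_{M+1}.
\]
This is the derivative-free relation obtained by eliminating $u_{M}'$ from the two relations underlying (\ref{eq:der_phi_L+1_eq_}) and (\ref{eq:der_phi_L_eq_lc_phi_L_phi_L+1}). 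Since $F_{L}$ and $G_{L}$ do not depend on $n$, the antisymmetric combination $W_{n}:=F_{L}(\eta,\rho)G_{L+n+1}(\eta,\rho)-F_{L+n+1}(\eta,\rho)G_{L}(\eta,\rho)$ inherits this recurrence in $n$ with $M=L+n+1$.

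It then remains to show that $c_{n}W_{n}$ solves the $R$-recurrence above, where $c_{n}=\tfrac{\sqrt{(L+1)^{2}+\eta^{2}}}{L+1}\sqrt{\tfrac{2L+2n+3}{2L+3}}$ is the constant appearing in the statement. Substituting $R_{n}^{(L)}=c_{n}W_{n}$ and re-expressing $W_{n+1}$ through $W_{n}$ and $W_{n-1}$ by the Coulomb recurrence, the claim reduces to the two scalar identities
\[
  \frac{c_{n}}{w_{L+n+1}\,c_{n+1}}=\frac{(M+1)(2M+1)}{\sqrt{(M+1)^{2}+\eta^{2}}},
  \qquad
  \frac{w_{L+n}\,c_{n-1}}{w_{L+n+1}\,c_{n+1}}
  =\frac{(M+1)\sqrt{M^{2}+\eta^{2}}}{M\sqrt{(M+1)^{2}+\eta^{2}}},
\]
with $M=L+n+1$. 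Both follow from $c_{n}/c_{n+1}=\sqrt{(2L+2n+3)/(2L+2n+5)}$ together with the explicit weights in (\ref{eq:lambda_w_coulomb}); this is the one genuinely computational step, and the place where the precise normalizing factor $c_{n}$ must be pinned down.

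Finally, the initial data match: $W_{-1}=F_{L}G_{L}-F_{L}G_{L}=0=R_{-1}^{(L)}$, while the Wronskian (\ref{eq:wronsk_F_G}) with $L$ replaced by $L+1$ gives $F_{L}G_{L+1}-F_{L+1}G_{L}=(L+1)/\sqrt{(L+1)^{2}+\eta^{2}}$, whence $c_{0}W_{0}=1=R_{0}^{(L)}$. Thus both sequences solve the same second-order recurrence and agree at $n=-1$ and $n=0$, so they coincide for all $n\in\mathbb{Z}_{+}$. The main obstacle is purely bookkeeping: keeping the weight ratios and the index shift $M=L+n+1$ consistent so that the two normalization identities come out exactly. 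Once the order recurrence is written in the correct normalized form and the explicit $w_{n}$, $\lambda_{n}$ are inserted, no genuine difficulty remains.
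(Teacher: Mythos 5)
Your proposal is correct and follows essentially the same route as the paper's own proof: both verify that the right-hand side satisfies the same three-term recurrence as $R_{n}^{(L)}(\eta;\rho)$ (via the derivative-free Coulomb recurrence \cite[14.2.3]{AbramowitzStegun}) and matches the initial conditions through the Wronskian formula (\ref{eq:wronsk_F_G}). Your write-up merely makes explicit the normalization bookkeeping (the two scalar identities for $c_{n}$ and the weights $w_{L+n}$) that the paper leaves to the reader, and those computations check out.
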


\begin{proof}
  To verify this identity it suffices to check that the RHS fulfills
  the same recurrence relation as $R_{n}^{(L)}(\eta,\rho)$ does and
  with the same initial conditions. The RHS actually meets the first
  requirement as it follows from known recurrence relations for the
  Coulomb wave functions, see \cite[14.2.3]{AbramowitzStegun}.  The
  initial condition is a consequence of the Wronskian formula
  (\ref{eq:wronsk_F_G}).
\end{proof}

For the computations to follow it is useful to notice that the weights
$w_{n}$ and the normalization constants $C_{L}(\eta)$, as defined in
(\ref{eq:lambda_w_coulomb}) and (\ref{eq:def_CL}), respectively, are
related by the equation
\begin{equation}
  \prod_{k=0}^{n-1}w_{L+k}
  = \sqrt{\frac{2L+2n+1}{2L+1}}\,
  \frac{C_{L+n}(\eta)}{C_{L}(\eta)}\,,\ \ n=0,1,2,\ldots.
  \label{eq:prod_wk_eq_CL}
\end{equation}

\begin{proposition}
  For the above indicated range of parameters and $\rho\neq0$,
  \begin{equation}
    \lim_{n\rightarrow\infty}\sqrt{(2L+3)(2L+2n+1)}\,
    C_{L+n}(\eta)\rho^{L+n}R_{n-1}^{(L)}(\eta;\rho)
    = \sqrt{1+\frac{\eta^{2}}{(L+1)^{2}}}\, F_{L}(\eta,\rho).
    \label{eq:lim_RLn}
  \end{equation}
\end{proposition}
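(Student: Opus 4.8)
The plan is to start from the explicit representation (\ref{eq:P_n_L_rel_F}) of $P_n^{(L)}$ in terms of $\mathfrak{F}$, specialized at $z=\rho^{-1}$ through (\ref{eq:R_n_L_rel_P_n_L}), and to show that after multiplying by the normalizing prefactor every $n$-dependent scalar factor cancels, leaving a fixed multiple of $\phi_L(\eta,\rho)$ in the limit. Writing out $R_{n-1}^{(L)}(\eta;\rho)$ from (\ref{eq:P_n_L_rel_F}), I would split the product $\prod_{k=1}^{n-1}(\rho^{-1}-\lambda_{L+k})/w_{L+k}$ into the numerator $\rho^{-(n-1)}\prod_{k=1}^{n-1}(1-\lambda_{L+k}\rho)$ and the weight product $\prod_{k=1}^{n-1}w_{L+k}$. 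The latter I evaluate by applying (\ref{eq:prod_wk_eq_CL}) after the index shift $k\mapsto k-1$ (equivalently, replacing $L$ by $L+1$ and $n$ by $n-1$), which gives $\prod_{k=1}^{n-1}w_{L+k}=\sqrt{(2L+2n+1)/(2L+3)}\;C_{L+n}(\eta)/C_{L+1}(\eta)$.

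Multiplying $R_{n-1}^{(L)}$ by $\sqrt{(2L+3)(2L+2n+1)}\,C_{L+n}(\eta)\rho^{L+n}$, I would verify that the factors $\sqrt{2L+2n+1}$, $C_{L+n}(\eta)$ and the surplus powers of $\rho$ cancel exactly, reducing the quantity to
\[
(2L+3)\,C_{L+1}(\eta)\,\rho^{L+1}\prod_{k=L+1}^{L+n-1}(1-\lambda_k\rho)\,
\mathfrak{F}\!\left(\left\{\frac{\gamma_k^{\,2}}{\rho^{-1}-\lambda_k}\right\}_{k=L+1}^{L+n-1}\right),
\]
which carries no $n$-dependent scalar. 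Letting $n\to\infty$, the second limit in (\ref{eq:lim_F_T_n}) makes the truncated $\mathfrak{F}$ converge to its infinite counterpart, while the finite product tends to $\prod_{k=L+1}^{\infty}(1-\lambda_k\rho)$. Using the elementary identity $\gamma_k^{\,2}/(\rho^{-1}-\lambda_k)=\gamma_k^{\,2}\rho/(1-\lambda_k\rho)$, the resulting product-times-$\mathfrak{F}$ is precisely the expression in (\ref{eq:G_J_L_eq_phi_L}) and therefore equals $\phi_L(\eta,\rho)$. This yields $\lim_{n\to\infty}(\cdots)=(2L+3)\,C_{L+1}(\eta)\,\rho^{L+1}\phi_L(\eta,\rho)$.

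Finally I would invoke the decomposition (\ref{eq:F_L_decomp}), which gives $\rho^{L+1}\phi_L(\eta,\rho)=F_L(\eta,\rho)/C_L(\eta)$, so the limit becomes $(2L+3)\,(C_{L+1}(\eta)/C_L(\eta))\,F_L(\eta,\rho)$. It then remains only to verify the constant identity $(2L+3)\,C_{L+1}(\eta)/C_L(\eta)=\sqrt{1+\eta^2/(L+1)^2}$, which follows immediately from the Gamma-function form (\ref{eq:def_CL}) of $C_L(\eta)$ via $|\Gamma(L+2+i\eta)|=|L+1+i\eta|\,|\Gamma(L+1+i\eta)|=\sqrt{(L+1)^2+\eta^2}\,|\Gamma(L+1+i\eta)|$ together with $\Gamma(2L+4)=(2L+2)(2L+3)\Gamma(2L+2)$. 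I expect the only delicate point to be the exact bookkeeping of the cancellations in the second step; the convergence and the evaluation of the final constant are then routine.
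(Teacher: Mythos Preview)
Your proposal is correct and follows essentially the same route as the paper: express $R_{n-1}^{(L)}$ via (\ref{eq:P_n_L_rel_F}), use (\ref{eq:prod_wk_eq_CL}) to collapse the weight product so that all $n$-dependent factors cancel, pass to the limit in the truncated $\mathfrak{F}$ by (\ref{eq:lim_F_T_n}), and identify the result through (\ref{eq:G_J_L_eq_phi_L}) and (\ref{eq:F_L_decomp}). The only cosmetic difference is that you carry the constant as $(2L+3)\,C_{L+1}(\eta)$ and reduce it at the end, whereas the paper writes it directly as $\dfrac{\sqrt{(L+1)^2+\eta^2}}{L+1}\,C_L(\eta)$; these are the same by the Gamma identity you indicate.
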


\begin{proof}
  Referring to (\ref{eq:R_n_L_rel_P_n_L}) and (\ref{eq:P_n_L_rel_F}),
  $R_{n}^{(L)}(\eta;\rho)$ can be expressed in terms of the function
  $\mathfrak{F}$. The sequence whose truncation stands in the argument
  of $\mathfrak{F}$ on the RHS of (\ref{eq:P_n_L_rel_F}) belongs to
  the domain $D$ defined in (\ref{eq:def_D}) and so one can apply the
  second equation in (\ref{eq:lim_F_T_n}). Concerning the remaining
  terms occurring on the LHS of (\ref{eq:lim_RLn}), one readily
  computes, with the aid of (\ref{eq:prod_wk_eq_CL}), that
  \begin{eqnarray*}
    &  & \lim_{n\rightarrow\infty}\,\sqrt{(2L+3)(2L+2n+1)}\,
    C_{L+n}(\eta)\rho^{L+n}\,\left(\prod_{k=1}^{n-1}\,
      \frac{\rho^{-1}-\lambda_{k+L}}{w_{k+L}}\right)\\
    &  & =\,\frac{\sqrt{(L+1)^{2}+\eta^{2}}}{L+1}\,
    C_{L}(\eta)\rho^{L+1}\prod_{k=1}^{\infty}
    \left(1+\frac{\eta\rho}{(L+k)(L+k+1)}\right).
  \end{eqnarray*}
  Recalling (\ref{eq:G_J_L_eq_phi_L}) and (\ref{eq:F_L_decomp}), the
  result immediately follows.
\end{proof}

\begin{remark}
  Note that the polynomials $R_{n}^{(L)}(\eta;\rho)$ can be regarded
  as a generalization of the Lommel polynomials $R_{n,\nu}(x)$.
  Actually, if $\eta=0$ then the Jacobi matrix $J_{\nu-1/2}$ is
  determined by the sequences
  \[
  \lambda_{n}=0,\ \ w_{n} = 1/(2\sqrt{(\nu+n+1)(\nu+n+2)}\,).
  \]
  Thus the recurrence (\ref{eq:PL_recur}) reduces to
  (\ref{eq:Lommel_recur}) for $\eta=0$ and $L=\nu-1/2$. More
  precisely, one finds that $P_{n}^{(\nu-1/2)}(0;z)$ coincides with
  the polynomial $P_{n}(2z)$ from Subsection~\ref{subsec:Lommel_basic}
  for all $n$. In view of (\ref{eq:Lommel_R_eq_Pn}) this means
  \begin{equation}
    R_{n}^{(\nu-1/2)}(0;\rho) = \sqrt{\frac{\nu+n+1}{\nu+1}}\,
    R_{n,\nu+1}(\rho)\label{eq:Coulomb_rel_Lommel}
  \end{equation}
  for $n\in\mathbb{Z}_{+}$, $\rho\in\mathbb{C}\setminus\{0\}$ and
  $\nu>-1$.  In addition we remark that, for the same values of
  parameters, (\ref{eq:lim_RLn}) yields Hurwitz' limit formula (see
  \S9.65 in \cite{Watson})
  \[
  \lim_{n\rightarrow\infty}\; \frac{(\rho/2)^{\nu+n}}
  {\Gamma(\nu+n+1)}\, R_{n,\nu+1}(\rho) = J_{\nu}(\rho).
  \]
\end{remark}

A more explicit description of the polynomials $P_{n}^{(L)}(\eta;z)$
can be derived. Let us write
\begin{equation}
  P_{n}^{(L)}(\eta;z)=\sum_{k=0}^{n}c_{k}(n,L,\eta)z^{n-k}.
  \label{eq:genLommel_coeffs}
\end{equation}
\begin{proposition} \label{prop:genL_explicit} Let
  $\{Q_{k}(n,L;\eta);\, k\in\mathbb{Z}_{+}\}$ be a sequence of monic
  polynomials in the variable $\ensuremath{\eta}$ defined by the
  recurrence
  \begin{equation}
    Q_{k+1}(n,L;\eta) = \eta\, Q_{k}(n,L;\eta)-h_{k}(n,L)
    Q_{k-1}(n,L;\eta)\text{ }\ \text{for}\ k\in\mathbb{Z}_{+},
    \label{eq:Q_recurr_I}
  \end{equation}
  with the initial conditions $Q_{-1}(n,L;\eta)=0$,
  $Q_{0}(n,L;\eta)=1$, where
  \[
  h_{k}(n,L) =
  \frac{k(2L+k+1)(2n-k+2)(2L+2n-k+3)}{4(2n-2k+1)(2n-2k+3)}\,,
  \ \ensuremath{k\in\mathbb{Z}_{+}}.
  \]
  Then the coefficients $c_{k}(n,L,\eta)$ defined in
  (\ref{eq:genLommel_coeffs}) fulfill
  \begin{eqnarray}
    c_{k}(n,L,\eta) & \hskip-0.4em = & \hskip-0.4em \frac{\sqrt{2L+2n+3}}
    {(L+1)\sqrt{2L+3}}\left|\frac{\Gamma(L+2+i\eta)}
      {\Gamma(L+n+2+i\eta)}\right|\frac{\Gamma(2n-k+2)\,
      \Gamma(2L+2n-k+3)}{\Gamma(2n-2k+2)\,\Gamma(2L+k+2)}\nonumber \\
    \noalign{\smallskip} &  & \times\,\frac{2^{-n+k-1}}{k!}\,
    Q_{k}(n,L;\eta)
    \label{eq:coeff_ck_explicit}
  \end{eqnarray}
  for $k=0,1,2,\ldots,n$.
\end{proposition}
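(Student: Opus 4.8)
The plan is to prove the formula by induction on $n$, comparing coefficients of like powers of $z$ in the three-term recurrence \eqref{eq:PL_recur}. Rewriting \eqref{eq:PL_recur} as $w_{L+n+1}P_{n+1}^{(L)}(\eta;z)=(z-\lambda_{L+n+1})P_{n}^{(L)}(\eta;z)-w_{L+n}P_{n-1}^{(L)}(\eta;z)$ and inserting the expansion \eqref{eq:genLommel_coeffs}, the coefficient of $z^{\,n+1-k}$ yields
\[
w_{L+n+1}\,c_{k}(n+1,L,\eta)=c_{k}(n,L,\eta)-\lambda_{L+n+1}\,c_{k-1}(n,L,\eta)-w_{L+n}\,c_{k-2}(n-1,L,\eta),
\]
valid for $0\le k\le n+1$ with the convention that $c_{j}(\cdot)$ vanishes for $j<0$ or for $j$ exceeding the degree. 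The base case reduces to $c_{0}(0,L,\eta)=1$, which \eqref{eq:coeff_ck_explicit} reproduces after elementary cancellation, and the leading coefficient $c_{0}(n,L,\eta)=\prod_{k=1}^{n}w_{L+k}^{-1}$ dictated by \eqref{eq:PL_recur} matches the $k=0$ instance of the claimed formula.

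Next I would substitute \eqref{eq:coeff_ck_explicit} into this recurrence; the purpose of the prefactors is that all $\eta$-dependent radicals cancel. With $w_{n}$ and $\lambda_{n}$ as in \eqref{eq:lambda_w_coulomb} and the Gamma identity $|\Gamma(L+n+2+i\eta)|^{2}=\bigl((L+n+1)^{2}+\eta^{2}\bigr)|\Gamma(L+n+1+i\eta)|^{2}$, the factor $\sqrt{(L+n+2)^{2}+\eta^{2}}$ carried by $w_{L+n+1}$ cancels against the ratio $|\Gamma(L+n+2+i\eta)/\Gamma(L+n+3+i\eta)|$ produced by the step $n\to n+1$ in the prefactor, so that $w_{L+n+1}c_{k}(n+1,\cdot)$ acquires a purely rational coefficient. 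In the last term the analogous computation leaves a surviving factor $(L+n+1)^{2}+\eta^{2}$, while the middle term contributes the single power of $\eta$ coming from $\lambda_{L+n+1}$. After dividing through by the common prefactor $\Phi(n,L,\eta)=\tfrac{\sqrt{2L+2n+3}}{(L+1)\sqrt{2L+3}}|\Gamma(L+2+i\eta)/\Gamma(L+n+2+i\eta)|$ and the remaining Gamma and factorial ratios, the recurrence collapses to an identity among the monic polynomials $Q_{k}(n+1,L;\eta)$, $Q_{k}(n,L;\eta)$, $Q_{k-1}(n,L;\eta)$ and $Q_{k-2}(n-1,L;\eta)$ with coefficients rational in $n,L,k$ (the residual $(L+n+1)^{2}+\eta^{2}$ multiplying the $Q_{k-2}$ term). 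This cancellation is precisely what forces the particular shape of the prefactor and shows $Q_{k}$ to be a genuine degree-$k$ polynomial in $\eta$.

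The hard part is that the resulting identity is a mixed relation linking the $Q$'s across the three neighbouring levels $n+1,n,n-1$, whereas their defining recurrence \eqref{eq:Q_recurr_I} only controls the dependence on $k$ at a fixed $n$. To close the induction I must therefore supply this contiguity relation. I would obtain it in one of two ways. The structural route is to recognise $Q_{k}(n,L;\eta)$, from the factored form of $h_{k}(n,L)$, as a (dual) Hahn / Wilson-type orthogonal family in the variable $\eta$; an explicit hypergeometric representation then renders the required relation a standard contiguous identity. The self-contained route is a secondary induction on $k$: assuming the formula for all $c_{k'}(n',\cdot)$ with $n'\le n$, the recurrence above \emph{defines} $Q_{k}(n+1,L;\eta)$ as a combination of lower-index $Q$'s at levels $n$ and $n-1$, and one checks directly, using \eqref{eq:Q_recurr_I} at those levels together with the explicit $h_{k}$, that the family so defined again satisfies \eqref{eq:Q_recurr_I} at level $n+1$.

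Either way the verification is pure algebraic bookkeeping of the rational factors $h_{k}(n,L)$, $w_{L+n}$, $w_{L+n+1}$ and $\lambda_{L+n+1}$. The boundary indices $k=0$ and $k\in\{n,n+1\}$, where the $c_{k-1}$ or $c_{k-2}$ terms drop out of range, are treated separately but routinely, and I expect the only genuine difficulty to lie in establishing the $n$-contiguity of the $Q_{k}$ described above.
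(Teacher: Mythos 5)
Your overall strategy is the same as the paper's: insert the ansatz (\ref{eq:coeff_ck_explicit}) into (\ref{eq:PL_recur}), compare coefficients of like powers of $z$, and use the cancellation of the $\eta$-dependent radicals (your bookkeeping here, including the surviving factor $(L+n+1)^{2}+\eta^{2}$ on the $c_{k-2}$ term, is correct and reproduces, up to an index shift, the paper's mixed relation (\ref{eq:2nd_recurr_Q_III})). But your argument stops exactly where the real work begins. The whole content of the proposition is the ``$n$-contiguity'' that you defer: the paper proves, as a separate lemma by induction on $k$, the explicit two-level identity
\[
Q_{k}(n,L;\eta)=\alpha_{k}(n,L)\,Q_{k}(n-1,L;\eta)+\beta_{k}(n,L)\,\eta\,Q_{k-1}(n-1,L;\eta),
\]
with $\alpha_{k}(n,L)=\frac{2(2n-2k+1)(L+n+1)}{(2n-k+1)(2L+2n-k+2)}$ and $\beta_{k}(n,L)=\frac{k(2L+k+1)}{(2n-k+1)(2L+2n-k+2)}$, whose proof rests on the three algebraic identities $\alpha_{k}+\beta_{k}=1$, $\alpha_{k+1}(n,L)h_{k}(n-1,L)=\alpha_{k-1}(n,L)h_{k}(n,L)$ and $\beta_{k}(n,L)h_{k-1}(n-1,L)=\beta_{k-1}(n,L)h_{k}(n,L)$; applying this relation twice pushes the mixed identity down to the single level $n-2$, where it collapses to zero by the defining recurrence (\ref{eq:Q_recurr_I}). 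Producing this lemma (or an equivalent) is the proof; nothing in your proposal supplies it.

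Neither of your two suggested routes fills that hole. The ``structural route'' is speculation: you exhibit no hypergeometric representation, and it is not evident that a symmetric recurrence whose coefficient $h_{k}(n,L)$ carries the denominators $(2n-2k+1)(2n-2k+3)$ matches a standard dual Hahn or Wilson family, let alone that the required relation would then be a stock contiguous identity. The ``self-contained route'' is, as stated, essentially circular: the quantity you would define as $Q_{k}(n+1,L;\eta)$ via the coefficient recurrence is a combination of $Q$'s at levels $n$ and $n-1$, so verifying (\ref{eq:Q_recurr_I}) for it at level $n+1$ again requires relating levels $n$ and $n-1$ to each other --- which is precisely the contiguity relation you have not proven. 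One could in principle carry a two-level relation along as part of a joint induction, but that amounts to first guessing the correct form of the relation above (with the correct $\alpha_{k}$, $\beta_{k}$) and then verifying the accompanying $h_{k}$-identities; none of this is done. So the frame of your proof (coefficient extraction, radical cancellation, boundary cases, and the implicit uniqueness of the solution of the coefficient recurrence) is sound and matches the paper, but the proposal has a genuine gap at its central step.
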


For the proof we shall need an auxiliary identity. Note that, if
convenient,
\linebreak
$Q_{k}(n,L;\eta)$ can be treated as a polynomial in $\eta$
with coefficients belonging to the field of rational functions in the
variables $n$, $L$.

\begin{lemma}
  The polynomials $Q_{k}(n,L;\eta)$ defined in
  Proposition~\ref{prop:genL_explicit} fulfill
  \begin{equation}
    Q_{k}(n,L;\eta)-\alpha_{k}(n,L)\, Q_{k}(n-1,L;\eta)
    -\beta_{k}(n,L)\,\eta\, Q_{k-1}(n-1,L;\eta) = 0
    \label{eq:Q_intermed_II}
  \end{equation}
  for $k=0,1,2,\ldots$, where
  \begin{eqnarray*}
    \alpha_{k}(n,L)
    & = & \frac{2(2n-2k+1)(L+n+1)}{(2n-k+1)(2L+2n-k+2)}\,, \\
    \beta_{k}(n,L)
    & = & \frac{k\,(2L+k+1)}{(2n-k+1)(2L+2n-k+2)}\,.
  \end{eqnarray*}
\end{lemma}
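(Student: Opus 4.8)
The plan is to prove the identity by induction on $k$, regarding $n$ and $L$ as free parameters so that every quantity is a polynomial in $\eta$ whose coefficients are rational functions of $n,L$; this is precisely the point of the remark preceding the lemma. Throughout I suppress the fixed arguments and abbreviate $Q_k(n)=Q_k(n,L;\eta)$, and similarly for $h_k(n)$, $\alpha_k(n)$, $\beta_k(n)$. I would first settle the two base cases. For $k=0$ the claim collapses to $\alpha_0(n)=1$, and for $k=1$, since $Q_1=\eta$ and $Q_0=1$, it collapses to $\alpha_1(n)+\beta_1(n)=1$; both follow at once from the explicit formulas. Two base cases are needed because the defining recurrence $Q_{k+1}(n)=\eta\,Q_k(n)-h_k(n)Q_{k-1}(n)$ links two consecutive levels, so the inductive step will consume the hypothesis at levels $k$ and $k-1$ in order to reach level $k+1$.

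For the inductive step I start from the defining recurrence at level $k$ and insert the induction hypothesis for both $Q_k(n)$ and $Q_{k-1}(n)$. This expresses $Q_{k+1}(n)$ as a combination of $Q_k(n-1)$, $Q_{k-1}(n-1)$ and $Q_{k-2}(n-1)$, but the combination carries an unwanted quadratic term $\eta^{2}\beta_k(n)Q_{k-1}(n-1)$. I remove it by applying the recurrence of the $(n-1)$-family once more, in the form $\eta^{2}Q_{k-1}(n-1)=\eta\,Q_k(n-1)+h_{k-1}(n-1)\,\eta\,Q_{k-2}(n-1)$, after which the right-hand side becomes a combination of $\eta Q_k(n-1)$, $Q_{k-1}(n-1)$ and $\eta Q_{k-2}(n-1)$ with coefficients rational in $n,L$ only. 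In parallel I expand the target expression $\alpha_{k+1}(n)Q_{k+1}(n-1)+\beta_{k+1}(n)\eta Q_k(n-1)$ by means of $Q_{k+1}(n-1)=\eta\,Q_k(n-1)-h_k(n-1)Q_{k-1}(n-1)$, so that it too becomes a combination of the same three terms.

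Since the first of these expansions is a genuine equality for $Q_{k+1}(n)$, proving the lemma amounts to checking that the two expansions coincide term by term; this reduces the whole step to the three scalar identities
\[
\alpha_k(n)+\beta_k(n)=\alpha_{k+1}(n)+\beta_{k+1}(n),\quad
h_k(n)\alpha_{k-1}(n)=\alpha_{k+1}(n)h_k(n-1),\quad
\beta_k(n)h_{k-1}(n-1)=h_k(n)\beta_{k-1}(n).
\]
The first holds because $\alpha_k(n)+\beta_k(n)=1$ for every $k$, a short direct computation, which makes the coefficient of $\eta Q_k(n-1)$ match automatically. The remaining two are cross-multiplication identities among the explicitly given rational functions and fall out after cancelling common factors. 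Because all three hold as genuine identities in $n,L$, the coefficients of $Q_{k-1}(n-1)$ agree and the coefficient of $\eta Q_{k-2}(n-1)$ vanishes on both sides, so no appeal to linear independence of the $Q_j(n-1)$ is needed. The only real obstacle is the bookkeeping in the inductive step, in particular the controlled reappearance of the $\eta^{2}$ and $Q_{k-2}(n-1)$ terms, which is exactly what the auxiliary reduction via the $(n-1)$-recurrence is designed to absorb; once it is in place, verifying the three displayed identities is routine.
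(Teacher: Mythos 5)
Your proof is correct and follows essentially the same route as the paper's: induction on $k$ with the two base cases $k=0,1$, reduction via the recurrence for both the $n$- and $(n-1)$-families, and the same three rational-function identities $\alpha_k+\beta_k=1$, $\alpha_{k+1}(n,L)h_k(n-1,L)=\alpha_{k-1}(n,L)h_k(n,L)$, and $\beta_k(n,L)h_{k-1}(n-1,L)=\beta_{k-1}(n,L)h_k(n,L)$. The only difference is cosmetic bookkeeping: you expand both sides in the terms $\eta Q_k(n-1)$, $Q_{k-1}(n-1)$, $\eta Q_{k-2}(n-1)$ and match coefficients, whereas the paper successively reduces the left-hand side of the identity to zero.
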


\begin{proof}
  It is elementary to verify that $\alpha_{k}(n,L)$, $\beta_{k}(n,L)$
  fulfill the identities
  \begin{eqnarray}
    &  & \alpha_{k}(n,L)+\beta_{k}(n,L)\,=\,1,
    \label{eq:aux_alpha_beta_i}\\
    &  & \alpha_{k+1}(n,L)h_{k}(n-1,L)-\alpha_{k-1}(n,L)h_{k}(n,L)
    \,=\,0,\label{eq:aux_alpha_h_ii}\\
    &  & \beta_{k}(n,L)h_{k-1}(n-1,L)-\beta_{k-1}(n,L)h_{k}(n,L)
    \,=\,0.\label{eq:aux_beta_h_iii}
  \end{eqnarray}
  Now to show (\ref{eq:Q_intermed_II}) one can proceed by induction in
  $k$. The case $k=0$ means $\alpha_{0}(n,L)=1$ which is obviously
  true. Furthermore, $Q_{1}(n,L;\eta)=\eta$ and so the case $k=1$ is a
  consequence of (\ref{eq:aux_alpha_beta_i}), with $k=1$. Suppose
  $k\geq2$ and that the identity is true for $k-1$ and $k-2$. Applying
  (\ref{eq:Q_recurr_I}) both to $Q_{k}(n,L;\eta)$ and
  $Q_{k}(n-1,L;\eta)$ and using (\ref{eq:aux_alpha_beta_i}),
  (\ref{eq:aux_alpha_h_ii}) one can show the LHS of
  (\ref{eq:Q_intermed_II}) to be equal to
  \begin{eqnarray*}
    && -\,h_{k-1}(n,L)\big(Q_{k-2}(n,L;\eta)-\alpha_{k-2}(n,L)
    Q_{k-2}(n-1,L;\eta)\big) \\
    && +\,\eta\big(Q_{k-1}(n,L;\eta)-Q_{k-1}(n-1,L;\eta)\big).
  \end{eqnarray*}
  Next we apply the induction hypothesis both to $Q_{k-1}(n,L;\eta)$
  and $Q_{k-2}(n,L;\eta)$ in this expression to find that it equals,
  up to a common factor $\eta$,
  \begin{eqnarray*}
    &  & -\beta_{k-1}(n,L)Q_{k-1}(n-1,L;\eta)
    +\beta_{k-1}(n,L)\,\eta\, Q_{k-2}(n-1,L;\eta)\\
    &  & -h_{k-1}(n,L)\beta_{k-2}(n,L)Q_{k-3}(n-1,L;\eta).
  \end{eqnarray*}
  Finally, making once more use of (\ref{eq:Q_recurr_I}), this time
  for the term $Q_{k-1}(n-1,L;\eta)$, one can prove the last
  expression to be equal to
  \[
  \big(h_{k-2}(n-1,L)\beta_{k-1}(n,L)
  - h_{k-1}(n,L)\beta_{k-2}(n,L)\big)Q_{k-3}(n-1,L;\eta) = 0
  \]
  as it follows from (\ref{eq:aux_beta_h_iii}).
\end{proof}

\begin{proof}[Proof of Proposition~\ref{prop:genL_explicit}]
  Write the polynomials $P_{n}^{(L)}(\eta;z)$ in the form
  (\ref{eq:genLommel_coeffs}) and substitute the RHS of
  (\ref{eq:coeff_ck_explicit}) for the coefficients $c_{k}(n,L,\eta)$.
  Plugging the result into (\ref{eq:PL_recur}) one finds that the
  recurrence relation for the sequence $\{P_{n}^{(L)}(\eta;z)\}$ is
  satisfied if and only if the terms $Q_{k}(n,L;\eta)$ from the
  substitution fulfill
  \begin{eqnarray}
    &  & a_{k}(n,L)Q_{k}(n,L;\eta)-b_{k}(n,L)
    Q_{k}(n-1,L;\eta)-c_{k}(n,L)\,\eta\, Q_{k-1}(n-1,L;\eta)\nonumber \\
    &  & +\, d_{k}(n,L,\eta)Q_{k-2}(n-2,L;\eta) \,=\, 0
    \label{eq:2nd_recurr_Q_III}
  \end{eqnarray}
  for $k,n\in\mathbb{N}$, $n\geq k$, where we again put
  $Q_{-1}(n,L;\eta)=0$, $Q_{0}(n,L;\eta)=1$, and
  \begin{eqnarray*}
    &  & a_{k}(n,L)=\frac{(2n-k)(2n-k+1)(2L+2n-k+1)(2L+2n-k+2)}
    {(2L+k)(2L+k+1)(L+n+1)}\,,\\
    &  & b_{k}(n,L)=\frac{4(2L+2n+1)(n-k)(2n-2k+1)}{(2L+k)(2L+k+1)}\,,\\
    &  & c_{k}(n,L)=\frac{k(2L+2n+1)(2n-k)(2L+2n-k+1)}
    {(2L+k)(L+n)(L+n+1)}\,,\\
    &  & d_{k}(n,L,\eta)
    = \frac{(k-1)k\left(\eta^{2}+(L+n)^{2}\right)}{L+n}\,.
  \end{eqnarray*}
  Notice that for $k=0$ one has
  \begin{eqnarray*}
    &  & a_{0}(n,L)Q_{0}(n,L,\eta)-b_{0}(n,L)Q_{0}(n-1,L,\eta)
    -c_{0}(n,L)\,\eta\, Q_{-1}(n-1,L,\eta)\\
    &  & =\, a_{0}(n,L)-b_{0}(n,L)\,=\,0.
  \end{eqnarray*}

  One also observes, as it should be, that the relation
  (\ref{eq:2nd_recurr_Q_III}) determines the terms $Q_{k}(n,L;\eta)$
  unambiguously. In fact, to see it one can proceed by induction in
  $k$. Suppose $k>0$ and all terms $Q_{j}(n,L;\eta)$ are already known
  for $j<k$, $n\geq j$. Putting $n=k$ in (\ref{eq:2nd_recurr_Q_III})
  one can express $Q_{k}(k,L;\eta)$ in terms of $Q_{k-1}(k-1,L;\eta)$
  and
  \linebreak
  $Q_{k-2}(k-2,L;\eta)$ since $b_{k}(k,L)=0$. Then, treating $k$ as
  being fixed and $n$ as a variable, one can interpret
  (\ref{eq:2nd_recurr_Q_III}) as a first order difference equation in
  the index $n$, with a right hand side, for an unknown sequence
  $\left\{ Q_{k}(n,L;\eta);\, n\geq k\right\} $.  The initial
  condition for $n=k$ is now known as well as the right hand side and
  so the difference equation is unambiguously solvable.

  To prove the proposition it suffices to verify that if
  $\{Q_{k}(n,L;\eta)\}$ is a sequence of monic polynomials in the
  variable $\eta$ defined by the recurrence (\ref{eq:Q_recurr_I}) then
  it obeys, too, the relation (\ref{eq:2nd_recurr_Q_III}). To this
  end, one may apply repeatedly the rule (\ref{eq:Q_intermed_II}) to
  bring the LHS of (\ref{eq:2nd_recurr_Q_III}) to the form
  \begin{equation}
    e_{0}(n,L)Q_{k}(n-2,L;\eta)+e_{1}(n,L)\,\eta\,
    Q_{k-1}(n-2,L;\eta)+e_{2}(n,L,\eta)Q_{k-2}(n-2,L;\eta)
    \label{eq:aux_LHS_2nd_Q}
  \end{equation}
  where
  \begin{eqnarray*}
    e_{0}(n,L) & = & a_{k}(n,L)\alpha_{k}(n-1,L)\alpha_{k}(n,L)
    - b_{k}(n,L)\alpha_{k}(n-1,L),\\
    e_{1}(n,L) & = & a_{k}(n,L)\alpha_{k-1}(n-1,L)\beta_{k}(n,L)
    +a_{k}(n,L)\alpha_{k}(n,L)\beta_{k}(n-1,L)\\
    &  & -\, b_{k}(n,L)\beta_{k}(n-1,L)-c_{k}(n,L)\alpha_{k-1}(n-1,L),
  \end{eqnarray*}
  and
  \[
  e_{2}(n,L,\eta) = \big(a_{k}(n,L)\beta_{k-1}(n-1,L)\beta_{k}(n,L)
  - c_{k}(n,L)\beta_{k-1}(n-1,L)\big)\eta^{2}+d_{k}(n,L,\eta).
  \]
  Direct evaluation then gives
  \[
  e_{1}(n,L)/e_{0}(n,L) = -1,\ e_{2}(n,L,\eta)/e_{0}(n,L)
  = h_{k-1}(n-2,L).
  \]
  Referring to the defining relation (\ref{eq:Q_recurr_I}), this
  proves (\ref{eq:aux_LHS_2nd_Q}) to be equal to zero indeed.
\end{proof}

\begin{remark}
  Let us shortly discuss what Proposition~\ref{prop:genL_explicit}
  tells us in the particular case when $\eta=0$, $L=\nu-1/2$. The
  recurrence (\ref{eq:Q_recurr_I}) is easily solvable for $\eta=0$.
  One has $Q_{2k+1}(n,L;0)=0$ and
  \[
  Q_{2k}(n,L;0) = (-1)^{k}\,\frac{(2k)!\,(n-k)!\,(2n-4k+1)!}
  {k!\,(n-2k)!\,(2n-2k+1)!}\,\frac{\Gamma(L+k+1)\Gamma(L+n+2)}
  {\Gamma(L+1)\text{ }\Gamma(L+n-k+2)}
  \]
  for $k=0,1,2,\ldots$. Whence $c_{2k+1}(n,\nu-1/2,0)=0$ and
  \[
  c_{2k}\!\left(n,\nu-\frac{1}{2},0\right)
  = \sqrt{\frac{\nu+n+1}{\nu+1}}\,(-1)^{k}2^{n-2k}
  \binom{n-k}{k}\frac{\Gamma(\nu+n-k+1)}{\Gamma(\nu+k+1)}\,.
  \]
  Recalling (\ref{eq:Coulomb_rel_Lommel}) one rederives this way the
  explicit expression (\ref{eq:Lommel_explicit}) for the usual Lommel
  polynomials.
\end{remark}

Let us mention two more formulas. The first one is quite substantial
and shows that the polynomials $R_{n}^{(L)}(\eta,\rho)$ play the same
role for the Coulomb wave functions as the Lommel polynomials do for
the Bessel functions. It follows from the abstract identity
(\ref{eq:lincomb_Fx_FTx}) where we specialize $d=n$,
\begin{equation}
  x_{k} = \frac{\gamma_{L+k-1}^{\,2}}{\rho^{-1}-\lambda_{L+k-1}}\,,
  \label{eq:spec_xk}
\end{equation}
and again make use of (\ref{eq:prod_wk_eq_CL}). Thus we get
\begin{eqnarray}
  &  & R_{n}^{(L-1)}(\eta,\rho)F_{L}(\eta,\rho)
  -\frac{L+1}{L}\,\sqrt{\frac{2L+3}{2L+1}}\,
  \frac{\sqrt{\eta^{2}+L^{2}}}{\sqrt{\eta^{2}+(L+1)^{2}}}\,
  R_{n-1}^{(L)}(\eta,\rho)F_{L-1}(\eta,\rho)\nonumber \\
  &  & =\,\sqrt{\frac{2L+2n+1}{2L+1}}\, F_{L+n}(\eta,\rho),
  \label{eq:R_n_L_in_lincomb_F_L}
\end{eqnarray}
where $n\in\mathbb{Z}_{+}$, $0\neq L>-1/2$, $\eta\in\mathbb{R}$ and
$\rho\neq0$. Moreover, referring to (\ref{eq:F_L_part}) and
(\ref{eq:Coulomb_rel_Lommel}), one observes that relation
(\ref{eq:Lommel_in_lincomb_Bessel}) is a particular case of
(\ref{eq:R_n_L_in_lincomb_F_L}) if one lets $\eta=0$ and $L=\nu-1/2$.

Similarly, one can derive the announced second identity from
(\ref{eq:F_wronsk}) by making the same choice as that in
(\ref{eq:spec_xk}) but writing $z$ instead of $\rho^{-1}$. Recalling
(\ref{eq:P_n_L_rel_F}) one finds that
\begin{equation}
  P_{n}^{(L-1)}(\eta;z)P_{n+s}^{(L)}(\eta;z)
  - P_{n+s+1}^{(L-1)}(\eta;z)P_{n-1}^{(L)}(\eta;z)
  = \frac{w_{L}}{w_{L+n}}\, P_{s}^{(L+n)}(\eta;z)
  \label{eq:P_n_L_eq_wronskcomb}
\end{equation}
holds for all $n,s\in\mathbb{Z}_{+}$.

We conclude this subsection by describing the measure of orthogonality
for the generalized Lommel polynomials. To this end, we need an
auxiliary result concerning the zeros of the function
$\phi_{L}(\eta,\cdot)$ which is in fact a particular case of
Proposition~\ref{prop:G_J_simple_zeros} if we specialize the sequences
$w$ and $\lambda$ to the choice made in (\ref{eq:lambda_w_coulomb}).
From (\ref{eq:G_J_L_eq_phi_L}) we know that
$\phi_{L}(\eta,\rho)=\mathcal{G}_{J_{L}}(\rho)$ and we note that,
obviously, $J_{L+1}=J_{L}^{\,(1)}$. Thus we arrive at the following
statement.

\begin{proposition}\label{prop:phi_L_zeros}
  Let $-1\neq L>-3/2$ if $\eta\in\mathbb{R}\setminus\{0\}$, and
  $L>-3/2$ if $\eta=0$.  Then the zeros of the function
  $\phi_{L}(\eta,\cdot)$ form a countable subset of
  $\mathbb{R}\setminus\{0\}$ with no finite accumulation points.
  Moreover, the zeros of $\phi_{L}(\eta,.)$ are all simple, the
  functions $\phi_{L}(\eta,\cdot)$ and $\phi_{L+1}(\eta,\cdot)$ have
  no common zeros, and the zeros of the same sign of
  $\phi_{L}(\eta,\cdot)$ and $\phi_{L+1}(\eta,\cdot)$ mutually
  separate each other.
\end{proposition}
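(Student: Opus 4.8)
The plan is to obtain the statement directly from Proposition~\ref{prop:G_J_simple_zeros} by specializing the abstract Jacobi matrix $J$ there to $J_{L}$. The bridge is the identity (\ref{eq:G_J_L_eq_phi_L}), which tells us that $\phi_{L}(\eta,\cdot)=\mathcal{G}_{J_{L}}$, together with the elementary observation that deleting the first row and column of $J_{L}$ yields $J_{L+1}$, i.e. $J_{L}^{(1)}=J_{L+1}$. Applying (\ref{eq:G_J_L_eq_phi_L}) with $L+1$ in place of $L$ (legitimate since $L>-3/2$ forces $L+1>-1/2$, which lies in the admissible range) then gives $\phi_{L+1}(\eta,\cdot)=\mathcal{G}_{J_{L+1}}=\mathcal{G}_{J_{L}^{(1)}}$. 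Under these two identifications, the four assertions of the proposition---reality and discreteness of the zeros, their simplicity, the absence of common zeros of $\phi_{L}$ and $\phi_{L+1}$, and the interlacing of same-sign zeros---become verbatim the conclusions of Proposition~\ref{prop:G_J_simple_zeros} for the choice $J=J_{L}$, $J^{(1)}=J_{L+1}$.

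The one substantive step is to check that $J_{L}$ satisfies the hypotheses of Proposition~\ref{prop:G_J_simple_zeros}. After the shift of index by $L+1$ that matches (\ref{eq:Jacobi_mat_L}) with the generic form (\ref{eq:Jacobi_J}), one must confirm that the diagonal sequence $\{\lambda_{L+1+n}\}_{n=0}^{\infty}$ is real and lies in $\ell^{1}(\mathbb{Z}_{+})$, while the off-diagonal sequence $\{w_{L+1+n}\}_{n=0}^{\infty}$ is positive and lies in $\ell^{2}(\mathbb{Z}_{+})$. Using (\ref{eq:lambda_w_coulomb}) with $\eta\in\mathbb{R}$ one has $\lambda_{n}=-\eta/(n(n+1))=O(n^{-2})$, so the (real) diagonal tail is summable; and the asymptotics $w_{n}\sim 1/(2n)$ as $n\to\infty$ give $w_{n}^{\,2}=O(n^{-2})$, so the off-diagonal tail is square summable. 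Positivity of $w_{L+1},w_{L+2},\dots$ is exactly what the assumed parameter range secures: for $L>-3/2$ each factor $2(L+k)+1$, $2(L+k)+3$ and $L+k+1$ entering $w_{L+k}$ for $k\ge1$ is strictly positive, while the exclusion $L\neq-1$ when $\eta\neq0$ removes the sole index at which the diagonal entry $\lambda_{L+1}=-\eta/((L+1)(L+2))$ would be singular.

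With the hypotheses in place the conclusion is immediate. I do not expect any genuinely new analytic difficulty here; no estimate beyond the two elementary tail computations is needed. The only point demanding care is the index bookkeeping---ensuring that the reindexed sequences inherit $\ell^{1}$, respectively $\ell^{2}$, membership as tails of the original convergent series, and that the truncation $J_{L}^{(1)}$ is correctly matched with $J_{L+1}$ (and hence $\mathcal{G}_{J_{L}^{(1)}}$ with $\phi_{L+1}$) rather than with some shifted matrix. I regard this identification, rather than any inequality, as the place where an oversight could slip in.
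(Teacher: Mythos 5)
Your proof is correct and takes essentially the same route as the paper: there the proposition is likewise obtained as the specialization of Proposition~\ref{prop:G_J_simple_zeros} to the sequences (\ref{eq:lambda_w_coulomb}), invoking (\ref{eq:G_J_L_eq_phi_L}) for $\phi_{L}(\eta,\cdot)=\mathcal{G}_{J_{L}}$ together with the identification $J_{L+1}=J_{L}^{\,(1)}$. Your explicit check of the $\ell^{1}$/$\ell^{2}$ hypotheses and of the positivity of the weights merely spells out what the paper leaves implicit.
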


Let us arrange the zeros of $\phi_{L}(\eta,\cdot)$ into a sequence
$\rho_{L,n}$, $n\in\mathbb{N}$ (not indicating the dependence on
$\eta$ explicitly). According to Proposition~\ref{prop:phi_L_zeros} we
can do it so that
$0<|\rho_{L,1}|\leq|\rho_{L,2}|\leq|\rho_{L,3}|\leq\dots$.  Thus we
have
\begin{equation}
  \{\rho_{L,n};\, n\in\mathbb{N}\}=\{\rho\in\mathbb{R};\,
  \phi_{L}(\eta,\rho)=0\}
  = \{\rho\in\mathbb{R}\setminus\{0\};\, F_{L}(\eta,\rho)=0\}.
  \label{eq:zeros_phiL_ev_JL}
\end{equation}

\begin{proposition}\label{prop:OGrel_Coulomb}
  Let $-1\neq L>-3/2$ if $\eta\in\mathbb{R}\setminus\{0\}$, and
  $L>-3/2$ if $\eta=0$.  Then the orthogonality relation
  \begin{equation}
    \sum_{k=1}^{\infty}\rho_{L,k}^{\,-2}\,
    R_{n}^{(L)}(\eta;\rho_{L,k})R_{m}^{(L)}(\eta;\rho_{L,k})
    = \frac{(L+1)^{2}+\eta^{2}}{(2L+3)(L+1)^{2}}\,\delta_{mn}
    \label{eq:OGrel_Coulomb}
  \end{equation}
  holds for $m,n\in\mathbb{Z}_{+}$.
\end{proposition}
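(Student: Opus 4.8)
The plan is to obtain the orthogonality relation (\ref{eq:OGrel_Coulomb}) as a direct application of Proposition~\ref{prop:OGrel_Gfunc}, specialized to the Jacobi matrix $J_{L}$ from (\ref{eq:Jacobi_mat_L})--(\ref{eq:lambda_w_coulomb}). The key observation is that $J_{L}$ is compact with $\lambda\in\ell^{1}$ and $w\in\ell^{2}$ (indeed both $\lambda_{n}=-\eta/(n(n+1))$ and $w_{n}$ decay fast enough), so the regularized theory applies. By (\ref{eq:G_J_L_eq_phi_L}) the regularized characteristic function is $\mathcal{G}_{J_{L}}(\rho)=\phi_{L}(\eta,\rho)$, whose nonzero reciprocal zeros are exactly the reciprocals of the $\rho_{L,k}$ collected in (\ref{eq:zeros_phiL_ev_JL}). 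Thus Proposition~\ref{prop:OGrel_Gfunc} gives the support of $\mathrm{d}\mu$ as $\{\rho_{L,k};\,k\in\mathbb{N}\}$ (the point $0$ contributing nothing since $J_{L}$ is invertible, as established for the analogous Lommel case), and supplies the explicit jumps.

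\textbf{Key steps.} First I would verify that $0$ is not an eigenvalue of $J_{L}$, so that the mass at $0$ vanishes and the sum in (\ref{eq:OGrel_Coulomb}) runs only over the nonzero zeros $\rho_{L,k}$; this parallels the invertibility argument used for the Lommel polynomials in Subsection~\ref{subsec:Lommel_basic} and can be checked by solving the formal eigenvalue equation at $0$. Second, with $x=\rho_{L,k}$ and writing $J^{(1)}=J_{L+1}=J_{L}^{(1)}$, I apply the jump formula (\ref{eq:jump_mu_in_x}),
\[
\mu(\rho_{L,k})-\mu(\rho_{L,k}-0)
=-\rho_{L,k}\,
\frac{\mathcal{G}_{J_{L+1}}(\rho_{L,k}^{-1})}
{\mathcal{G}'_{J_{L}}(\rho_{L,k}^{-1})}.
\]
Third, I translate the right-hand side into Coulomb wave functions: by (\ref{eq:G_J_L_eq_phi_L}), $\mathcal{G}_{J_{L}}=\phi_{L}(\eta,\cdot)$ and $\mathcal{G}_{J_{L+1}}=\phi_{L+1}(\eta,\cdot)$; differentiating the relation $\mathcal{G}_{J_{L}}(\rho)=\phi_{L}(\eta,\rho)$ and using the decomposition (\ref{eq:F_L_decomp}) together with the derivative rule (\ref{eq:der_phi_L_eq_lc_phi_L_phi_L+1}) expresses both $\phi_{L+1}(\eta,\rho_{L,k})$ and $\phi_{L}'(\eta,\rho_{L,k})$ at the zero $\rho_{L,k}$. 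Finally, recalling $R_{n}^{(L)}(\eta;\rho)=P_{n}^{(L)}(\eta;\rho^{-1})$ from (\ref{eq:R_n_L_rel_P_n_L}), I substitute the computed jumps into the general orthonormality $\int P_{m}P_{n}\,\mathrm{d}\mu=\delta_{mn}$, which becomes $\sum_{k}(\mu(\rho_{L,k})-\mu(\rho_{L,k}-0))\,R_{m}^{(L)}(\eta;\rho_{L,k})R_{n}^{(L)}(\eta;\rho_{L,k})=\delta_{mn}$, and simplify the weight to the closed form $\rho_{L,k}^{-2}$ up to the constant $((L+1)^{2}+\eta^{2})/((2L+3)(L+1)^{2})$.

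\textbf{Main obstacle.} The routine mechanics of Proposition~\ref{prop:OGrel_Gfunc} are immediate; the real work is the \emph{simplification of the jump} into the clean factor $\rho_{L,k}^{-2}$ times the stated constant. At a zero $\rho=\rho_{L,k}$ of $\phi_{L}(\eta,\cdot)$, the derivative identity (\ref{eq:der_phi_L_eq_lc_phi_L_phi_L+1}) reduces to
\[
\partial_{\rho}\phi_{L}(\eta,\rho_{L,k})
=-\frac{\rho_{L,k}}{2L+3}
\left(1+\frac{\eta^{2}}{(L+1)^{2}}\right)\phi_{L+1}(\eta,\rho_{L,k}),
\]
so the factor $\phi_{L+1}(\eta,\rho_{L,k})$ in the numerator cancels against the same factor hidden inside $\mathcal{G}'_{J_{L}}(\rho_{L,k}^{-1})=\partial_{\rho}\phi_{L}$ via the chain rule for $\rho\mapsto\rho^{-1}$. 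Tracking the chain-rule Jacobian $\mathrm{d}(\rho^{-1})/\mathrm{d}\rho=-\rho^{-2}$ and the $z^{k+1}$ prefactors buried in the regularization (\ref{eq:def_G_J}) is where sign and power-of-$\rho_{L,k}$ bookkeeping must be done with care; I expect the constant $(2L+3)^{-1}(1+\eta^{2}/(L+1)^{2})$ to emerge precisely from (\ref{eq:der_phi_L_eq_lc_phi_L_phi_L+1}), yielding the coefficient on the right-hand side of (\ref{eq:OGrel_Coulomb}). The $\eta=0$, $L=\nu-1/2$ specialization should reproduce (\ref{eq:OGrel_Lommel}) via (\ref{eq:Coulomb_rel_Lommel}), which serves as a useful consistency check on the constant.
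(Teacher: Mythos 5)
Your overall route coincides with the paper's: apply Proposition~\ref{prop:OGrel_Gfunc} to $J_L$, identify $\mathcal{G}_{J_L}(\rho)=\phi_L(\eta,\rho)$ via (\ref{eq:G_J_L_eq_phi_L}), and compute the jumps from (\ref{eq:jump_mu_in_x}) using (\ref{eq:der_phi_L_eq_lc_phi_L_phi_L+1}); at a zero of $\phi_L(\eta,\cdot)$ that identity indeed cancels $\phi_{L+1}$ and yields the weight $\frac{(2L+3)(L+1)^2}{(L+1)^2+\eta^2}\,\rho_{L,k}^{-2}$, exactly as in the paper. One notational slip worth fixing: the support points of $\mathrm{d}\mu$ are the eigenvalues $\rho_{L,k}^{-1}$, not the zeros $\rho_{L,k}$, so the jump formula reads $\mu(\rho_{L,k}^{-1})-\mu(\rho_{L,k}^{-1}-0)=-\rho_{L,k}^{-1}\,\mathcal{G}_{J_{L+1}}(\rho_{L,k})\big/\mathcal{G}'_{J_L}(\rho_{L,k})$; the functions $\mathcal{G}$ are evaluated at $\rho_{L,k}$ itself, and since $\mathcal{G}_{J_L}(\rho)=\phi_L(\eta,\rho)$ identically, $\mathcal{G}'_{J_L}=\partial_\rho\phi_L$ with no chain-rule Jacobian entering at all.

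The genuine gap is your first step, the claim that the absence of mass at $0$ "can be checked by solving the formal eigenvalue equation at $0$" in parallel with the Lommel case. That parallel fails for $\eta\neq0$. In the Lommel case the diagonal vanishes, so the formal kernel equation is a two-term recursion with an explicit, manifestly non-$\ell^2$ solution. For $J_L$ with $\eta\neq0$ the diagonal $\lambda_n=-\eta/(n(n+1))$ is nonzero, the kernel equation is a genuine three-term recursion with no available closed-form solution, and deciding whether the formal solution $\bigl(P_n^{(L)}(\eta;0)\bigr)_{n}$ is square-summable is precisely the hard point; the difficulty is underscored by Remark~\ref{rem:OPs_der_FL}, where the authors leave the same question for $\tilde{J}_L$ open as an unproved hypothesis. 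The paper circumvents this with an idea missing from your proposal: compute the jumps at all nonzero support points first, and observe that their total mass involves $\sum_{k}\rho_{L,k}^{-2}=\|J_L\|_2^{\,2}$, the squared Hilbert--Schmidt norm (the $\rho_{L,k}^{-1}$ being exactly the nonzero eigenvalues of the self-adjoint compact operator $J_L$). Since $\|J_L\|_2^{\,2}=\sum_{n}\lambda_{L+n}^2+2\sum_{n}w_{L+n}^2=\frac{(L+1)^2+\eta^2}{(2L+3)(L+1)^2}$ can be computed in closed form, the jumps at the nonzero points already sum to $1$; as $\mu$ is a probability measure, the mass $\Lambda_0$ at $0$ must vanish. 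Invertibility of $J_L$ is thus a corollary of the proposition, not an input to it. Without this (or some substitute) argument, your proof only establishes the orthogonality relation up to an unknown extra term $\Lambda_0\,P_n^{(L)}(\eta;0)\,P_m^{(L)}(\eta;0)$ on the left-hand side of (\ref{eq:OGrel_Coulomb}).
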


\begin{proof}
  By Proposition~\ref{prop:OGrel_Gfunc}, we have the orthogonality
  relation
  \[
  \int_{\mathbb{R}}P_{m}^{(L)}(\eta;\rho)P_{n}^{(L)}(\eta;\rho)\,
  \mbox{d}\mu(\rho) = \delta_{mn}
  \]
  where $\mbox{d}\mu$ is supported on the set
  $\{\rho_{L,n}^{\,-1};\,n\in\mathbb{N}\}\cup\{0\}$. Applying formula
  (\ref{eq:jump_mu_in_x}) combined with (\ref{eq:G_J_L_eq_phi_L}) and
  (\ref{eq:der_phi_L_eq_lc_phi_L_phi_L+1}) one finds that
  \[
  \mu(\rho_{L,k}^{\,-1})-\mu(\rho_{L,k}^{\,-1}-0)
  = -\rho_{L,k}^{\,-1}\,\frac{\phi_{L+1}(\eta,\rho_{L,k})}
  {\partial_{\rho}\phi_{L}(\eta,\rho_{L,k})}
  = \frac{(2L+3)(L+1)^{2}}{(L+1)^{2}+\eta^{2}}\,\rho_{L,k}^{\,-2}.
  \]
  We claim that $0$ is a point of continuity of $\mu$. Indeed, let us
  denote by $\Lambda_{k}$ the magnitude of the jump of $\mu$ at
  $\rho_{L,k}^{-1}$ if $k\in\mathbb{N}$, and at $0$ if $k=0$. Then,
  since $d\mu$ is a probability measure, one has
  \begin{equation}
    1 = \sum_{k=0}^{\infty}\Lambda_{k}
    = \Lambda_{0}+\frac{(2L+3)(L+1)^{2}}{(L+1)^{2}+\eta^{2}}
    \sum_{k=1}^{\infty}\rho_{L,k}^{\,-2}
    = \Lambda_{0}+\frac{(2L+3)(L+1)^{2}}{(L+1)^{2}+\eta^{2}}\,
    \|J_{L}\|_{2}^{\,2}
    \label{eq:tocomp_Lambda_0}
  \end{equation}
  where $\|J_{L}\|_{2}$ stands for the Hilbert-Schmidt norm of
  $J_{L}$.  This norm, however, can be computed directly,
  \[
  \|J_{L}\|_{2}^{\,2} = \sum_{n=1}^{\infty}
  \lambda_{L+n}^{\,2}+2\sum_{n=1}^{\infty}w_{L+n}^{\,2}
  = \frac{(L+1)^{2}+\eta^{2}}{(2L+3)(L+1)^{2}}\,.
  \]
  Comparing this equality to (\ref{eq:tocomp_Lambda_0}) one finds that
  $\Lambda_{0}=0$. To conclude the proof it suffices to recall
  (\ref{eq:R_n_L_rel_P_n_L}).
\end{proof}

In the course of the proof of Proposition~\ref{prop:OGrel_Coulomb} we
have shown that $0$ is a point of continuity of $\mu$. It follows that
$0$ is not an eigenvalue of the compact operator $J_{L}$.

\begin{corollary}
  Let $-1\neq L>-3/2$ if $\eta\in\mathbb{R}\setminus\{0\}$, and
  $L>-3/2$ if $\eta=0$. Then the operator $J_{L}$ is invertible.
\end{corollary}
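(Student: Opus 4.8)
The plan is to read the statement off directly from the spectral information already assembled, since the decisive computation has in fact been carried out during the proof of Proposition~\ref{prop:OGrel_Coulomb}. Recall that $J_{L}$ is compact and self-adjoint, so $\spec(J_{L})$ consists of real eigenvalues whose only possible accumulation point is $0$. Hence invertibility of $J_{L}$ (understood, as elsewhere in the paper, as injectivity; the inverse is then an unbounded, densely defined operator on the dense range of $J_{L}$) amounts precisely to the assertion that $0$ is not an eigenvalue, i.e. $\Ker(J_{L})=\{0\}$.

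First I would translate the eigenvalue question into a statement about the orthogonality measure. By (\ref{eq:mu_eq_spec_meas}) the measure $\mu$ equals the scalar spectral measure $M\mapsto\langle e_{0},E_{J_{L}}(M)e_{0}\rangle$, and, as was observed in the proof of Proposition~\ref{thm:OG_relation} (using $P_{0}\equiv1$), for a point $x\in\spec_{p}(J_{L})$ one has
\[
\mu(\{x\}) = \langle e_{0},E_{J_{L}}(\{x\})e_{0}\rangle
= \frac{1}{\|P(x)\|^{2}} > 0 .
\]
Consequently $0$ is an eigenvalue of $J_{L}$ if and only if it carries a positive mass of $\mu$.

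Second, I would invoke the fact established during the proof of Proposition~\ref{prop:OGrel_Coulomb}, namely that $\Lambda_{0}=\mu(\{0\})=0$, so that $0$ is a point of continuity of $\mu$. Combining this with the equivalence of the previous paragraph forces $0\notin\spec_{p}(J_{L})$, whence $J_{L}$ is injective and therefore invertible.

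I do not expect a genuine obstacle here: the only quantitative input is the vanishing of $\Lambda_{0}$, which follows from matching the total-mass normalization $\sum_{k}\Lambda_{k}=1$ against the explicit Hilbert--Schmidt norm $\|J_{L}\|_{2}^{\,2}=((L+1)^{2}+\eta^{2})/((2L+3)(L+1)^{2})$ in (\ref{eq:tocomp_Lambda_0}). The remaining work is purely the logical translation between ``$0$ is a continuity point of the orthogonality measure'' and ``$0$ is not an eigenvalue of $J_{L}$'', which is immediate from the spectral-measure representation (\ref{eq:mu_eq_spec_meas}).
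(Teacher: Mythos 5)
Your proof is correct and follows essentially the same route as the paper: the corollary is read off from the fact, established in the proof of Proposition~\ref{prop:OGrel_Coulomb}, that $\Lambda_{0}=\mu(\{0\})=0$, which for the compact self-adjoint operator $J_{L}$ rules out $0$ as an eigenvalue. Your additional justification that an eigenvalue $x$ must carry positive mass $\mu(\{x\})=1/\|P(x)\|^{2}$ (since $P_{0}(x)=1$) merely spells out the step the paper leaves implicit.
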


\begin{remark}
  Again, letting $\eta=0$ and $L=\nu-1/2$ in (\ref{eq:OGrel_Coulomb})
  and recalling (\ref{eq:Coulomb_rel_Lommel}), one readily verifies
  that (\ref{eq:OGrel_Lommel}) is a particular case of
  (\ref{eq:OGrel_Coulomb}).
\end{remark}

\begin{remark}\label{rem:OPs_der_FL}
  In the same way as above one can define a sequence of OPs associated
  with the function $\partial_{\rho}F_{L}(\eta,\rho)$ or, more
  generally, with
  $\alpha{}F_{L}(\eta,\rho)+\rho\partial_{\rho}F_{L}(\eta,\rho)$ for
  some $\alpha$ real. But our results in this respect are not complete
  yet and are notably less elegant than those for the function
  $F_{L}(\eta,\rho)$, and so we confine ourselves to this short
  comment. Let us just consider the former particular case and call
  the corresponding sequence of OPs
  $\{\tilde{P}_{n}^{(L)}(\eta;z)\}_{n=0}^{\infty}$. It is defined by a
  recurrence analogous to (\ref{eq:PL_recur}) but now the coefficients
  in the relation are matrix entries of $\tilde{J}_{L}$ rather than
  those of $J_{L}$. The initial conditions are the same, and one has
  to restrict the range of parameters to the values $L>-1/2$ and
  $\eta\in\mathbb{R}$.  Let us denote
  $\tilde{R}_{n}^{(L)}(\eta;\rho):=\tilde{P}_{n}^{(L)}(\eta;\rho^{-1})$
  for $\rho\neq0$, $n\in\mathbb{Z}_{+}$. The zeros of the function
  $\rho\mapsto\partial_{\rho}F_{L}(\eta,\rho)$ can be shown to be all
  real and simple and to form a countable set with no finite
  accumulation points. One may arrange the zeros into a sequence. Let
  us call it $\{\tilde{\rho}_{L,n};\, n\in\mathbb{N}\}$.  Then the
  corresponding orthogonality measure $\mbox{d}\mu$ is again supported
  on the set
  $\{\tilde{\rho}_{L,k}^{\,-1};\,k\in\mathbb{N}\}\cup\{0\}$.  It is
  possible to directly compute the magnitude $\Lambda_{k}$ of the jump
  at $\tilde{\rho}_{L,k}^{\,-1}$ of the piece-wise constant function
  $\mu$ with the result
  \[
  \Lambda_{k} = \frac{L+1}{\tilde{\rho}_{L,k}^{\,2}
    -2\eta\tilde{\rho}_{L,k}-L(L+1)}\,.
  \]
  We propose that $\mu$ has no jump at the point $0$ or, equivalently,
  that the Jacobi matrix $\tilde{J}_{L}$ is invertible, but we have no
  proof for this hypothesis yet.
\end{remark}

\section{The spectral zeta function associated with $F_{L}(\eta,\rho)$ \label{subsec:zeta_F}}

Let us recall that the spectral zeta function of a positive definite
operator $A$ with a discrete spectrum whose inverse $A^{-1}$ belongs
to the $p$-th Schatten class is defined as
\[
\zeta^{(A)}(s) := \sum_{n=1}^{\infty}\frac{1}{\lambda_{n}^{\, s}}
= \Tr A^{-s},\quad\Re s\geq p,
\]
where $0<\lambda_{1}\leq\lambda_{2}\leq\lambda_{3}\leq\dots$ are the
eigenvalues of $A$. The zeta function can be used to approximately
compute the ground state energy of $A$, i.e. the lowest eigenvalue
$\lambda_{1}$. This approach is known as Euler's method (initially
applied to the first positive zero of the Bessel function $J_{0}$)
which is based on the inequalities
\begin{equation}
  \zeta^{(A)}(s)^{-1/s}<\lambda_{1}
  < \frac{\zeta^{(A)}(s)}{\zeta^{(A)}(s+1)}\,,\quad s\geq p.
  \label{eq:zeta_ineq}
\end{equation}
In fact, the inequalities in (\ref{eq:zeta_ineq}) become equalities in
the limit $s\rightarrow\infty$.

In this section we describe recursive rules for the zeta function
associated with the regular Coulomb wave function. The procedure can
be applied, however, to a wider class of special functions. For
example, this approach can also be applied to the Bessel functions
resulting in the well known convolution formulas for the Rayleigh
function \cite{Kishore}.  Not surprisingly, the recurrences derived
below can be viewed as a generalization of these known results.

Recall also that the regularized determinant,
\[
\det\!{}_{2}(1+A) := \det\left((1+A)\exp(-A)\right),
\]
is well defined if $A$ is a Hilbert-Schmidt operator, i.e. belonging
to the second Schatten class, on a separable Hilbert space. Moreover,
the regularized determinant is continuous in the Hilbert-Schmidt norm
\cite[Theorem~9.2.]{Simon}.

Referring to (\ref{eq:Jacobi_mat_L}), (\ref{eq:G_J_L_eq_phi_L}) and
(\ref{eq:zeros_phiL_ev_JL}), we start from the identity
\begin{eqnarray*}
  &  & \det\!{}_{2}(1-\rho J_{L,n})
  = \exp(\rho\Tr J_{L,n})\det(1-\rho J_{L,n})\\
  &  & = \exp\!\left(\frac{\eta\rho}{L+n+1}
    - \frac{\eta\rho}{L+1}\right)\left(\prod_{k=1}^{n}
    (1-\rho\lambda_{k+L})\right)
  \mathfrak{F}\!\left(\left\{ \frac{\gamma_{L+k}^{\,2}}
      {\lambda_{L+k}-\rho^{-1}}\right\} _{k=1}^{n}\right)
\end{eqnarray*}
where $J_{L,n}$ stands for the $n\times n$ truncation of $J_{L}$. The
formula can be verified straightforwardly by induction in $n$ with the
aid of the rule (\ref{eq:F_T_recur}). Now, sending $n$ to infinity and
using (\ref{eq:G_J_L_eq_phi_L}) we get
\begin{equation}
  \det\!{}_{2}(1-\rho J_{L})
  = \exp\!\left(-\frac{\eta\rho}{L+1}\right)\phi_{L}(\eta,\rho).
  \label{eq:det_2_Coulomb}
\end{equation}
On the other hand, we have the Hadamard product formula
\begin{equation}
  \det\!{}_{2}(1-\rho J_{L}) = \prod_{n=1}^{\infty}
  \left(1-\frac{\rho}{\rho_{L,n}}\right)e^{\,\rho/\rho_{L,n}},
  \label{eq:Coulomb_prod}
\end{equation}
see \cite[Theorem~9.2.]{Simon}. Combining (\ref{eq:det_2_Coulomb}) and
(\ref{eq:Coulomb_prod}) one arrives at the Hadamard infinite product
expansion of $\phi_{L}(\eta,.)$,
\begin{equation}
  \phi_{L}(\eta,\rho) = e^{\eta\rho/(L+1)}\prod_{n=1}^{\infty}
  \left(1-\frac{\rho}{\rho_{L,n}}\right)e^{\,\rho/\rho_{L,n}}.
  \label{eq:phi_Hadam}
\end{equation}

Let us define
\[
\zeta_{L}(k) := \sum_{n=1}^{\infty}\frac{1}{\rho_{L,n}^{\, k}}\,,
\ \ k\geq2.
\]
In view of (\ref{eq:phi_Hadam}), we can expand the logarithm of
$\phi_{L}(\eta,\rho)$ into a power series,
\[
\ln\phi_{L}(\eta,\rho) = \frac{\eta\rho}{L+1}
-\sum_{n=1}^{\infty}\sum_{k=2}^{\infty}\frac{1}{k}
\left(\frac{\rho}{\rho_{L,n}}\right)^{\! k},
\]
whenever $\rho\in\mathbb{C}$, $|\rho|<|\rho_{L,1}|$. Whence
\[
\frac{\partial_{\rho}\phi_{L}(\eta,\rho)}{\phi_{L}(\eta,\rho)}
= \frac{\eta}{L+1}-\sum_{k=1}^{\infty}\zeta_{L}(k+1)\,\rho^{k}.
\]
Comparing this equality to (\ref{eq:der_phi_L_eq_lc_phi_L_phi_L+1})
one finds that
\begin{equation}
  \sum_{k=0}^{\infty}\zeta_{L}(k+2)\rho^{k}
  = \frac{1}{2L+3}\left(1+\frac{\eta^{2}}{(L+1)^{2}}\right)
  \frac{\phi_{L+1}(\eta,\rho)}{\phi_{L}(\eta,\rho)}
  \quad\mbox{ for }|\rho|<|\rho_{L,1}|.
  \label{eq:ratio_Coulomb_zeta}
\end{equation}
So one can obtain the values of $\zeta_{L}(k)$ for $k\in\mathbb{N}$,
$k\geq2$, by inspection of the Taylor series of the RHS in
(\ref{eq:ratio_Coulomb_zeta}).

However, an apparently more efficient tool to compute the values of
the zeta function would be a recurrence formula. To find it one has to
differentiate equation (\ref{eq:ratio_Coulomb_zeta}) with respect to
$\rho$ and to use both formulas (\ref{eq:der_phi_L+1_eq_}) and
(\ref{eq:der_phi_L_eq_lc_phi_L_phi_L+1}). This way one arrives at the
equation
\begin{eqnarray*}
  &  & \hskip-2em(2L+3)
  \left(1+\frac{\eta^{2}}{(L+1)^{2}}\right)^{\!-1}
  \sum_{k=1}^{\infty}k\,\zeta_{L}(k+2)\rho^{k}
  \,=\, (2L+3)\left(1-\frac{\phi_{L+1}(\eta,\rho)}
    {\phi_{L}(\eta,\rho)}\right)\\
  &  & \hskip8em-\,\frac{2\eta\rho}{L+1}
  \frac{\phi_{L+1}(\eta,\rho)}{\phi_{L}(\eta,\rho)}
  + \frac{\rho^{2}}{2L+3}\left(1+\frac{\eta^{2}}
    {(L+1)^{2}}\right)\left(\frac{\phi_{L+1}(\eta,\rho)}
    {\phi_{L}(\eta,\rho)}\right)^{\!2}\!.
\end{eqnarray*}
Using (\ref{eq:ratio_Coulomb_zeta}) to express
$\phi_{L+1}(\eta,\rho)/\phi_{L}(\eta,\rho)$ one obtains
\begin{eqnarray*}
  \sum_{k=1}^{\infty}k\,\zeta_{L}(k+2)\rho^{k}
  & = & 1+\frac{\eta^{2}}{(L+1)^{2}}-(2L+3)
  \sum_{k=0}^{\infty}\zeta_{L}(k+2)\rho^{k}\\
  &  & +\,\frac{2\eta}{L+1}\sum_{k=1}^{\infty}
  \zeta_{L}(k+1)\rho^{k}+\sum_{k=0}^{\infty}
  \sum_{l=0}^{k}\zeta_{L}(l+2)\zeta_{L}(k-l+2)\rho^{k+2}.
\end{eqnarray*}
Now it suffices to equate coefficients at the same powers of $\rho$.
In particular, for the absolute term we get
\begin{equation}
  \zeta_{L}(2) = \frac{1}{2L+3}
  \left(1+\frac{\eta^{2}}{(L+1)^{2}}\right)\!.
  \label{eq:zeta_F_2}
\end{equation}
Note that $\zeta_{L}(2)$ is the square of the Hilbert-Schmidt norm of
$J_{L}$. The desired recurrence relation reads
\begin{equation}
  \zeta_{L}(k+1) = \frac{1}{2L+k+2}
  \left(\frac{2\eta}{L+1}\,\zeta_{L}(k)
    +\sum_{l=1}^{k-2}\zeta_{L}(l+1)\zeta_{L}(k-l)\right)\!,
  \ \ k=2,3,4,\dots.
  \label{eq:zeta_F_genrecur}
\end{equation}

As described above, bounds on the first (in modulus) zero $\rho_{L,1}$
can be determined with the aid of the zeta function. The operator
$J_{L}$ is not positive, however, and so the bounds should be written
as follows
\[
\zeta_{L}(2s)^{-1/s}<\rho_{L,1}^{\,2}
< \frac{\zeta_{L}(2s)}{\zeta_{L}(2s+2)}\,,\ \ s\geq1.
\]
In the simplest case, for $s=1$, we get the estimates
\[
\frac{(2L+3)(L+1)^{2}}{(L+1)^{2}+\eta^{2}}
< \rho_{L.1}^{\,2}<\frac{(2L+3)(2L+5)(L+2)(L+1)^{2}}
{(L+4)\eta^{2}+(L+2)(L+1)^{2}}\,.
\]

Further let us examine the particular case when $\eta=0$ and
$L=\nu-1/2$.  Then the rules (\ref{eq:zeta_F_2}) and
(\ref{eq:zeta_F_genrecur}) reproduce the well known recurrence
relations for the Rayleigh function $\sigma_{2n}(\nu)$, with $n\geq2$
and $\nu>-1$ \cite{Kishore}.  Recall that
\[
\sigma_{2n}(\nu) := \sum_{k=1}^{\infty}j_{\nu,k}^{\,-2n}
\]
where $j_{\nu,k}$ denotes the $k$-th positive zero of the Bessel
function $J_{\nu}$. The recurrence relation reads
\[
\sigma_{2}(\nu) = \frac{1}{4(\nu+1)}\,,\quad\sigma_{2n}(\nu)
= \frac{1}{n+\nu}\sum_{k=1}^{n-1}\sigma_{2k}(\nu)\sigma_{2n-2k}(\nu)
\ \ \text{for}\ n=2,3,4,\ldots.
\]

\begin{remark}
  Let us remark that instead of (\ref{eq:zeta_F_genrecur}) one can
  derive a recurrence relation in a form which is a linear combination
  of zeta functions. Rewrite equation (\ref{eq:ratio_Coulomb_zeta}) as
  \[
  \left(1+\frac{\eta^{2}}{(L+1)^{2}}\right)
  \phi_{L+1}(\eta,\rho)=(2L+3)\phi_{L}(\eta,\rho)
  \sum_{k=0}^{\infty}\zeta_{L}(k+2)\rho^{k}
  \]
  and replace everywhere the function $\phi_{L}$ by the power
  expansion
  \[
  \phi_{L}(\eta,\rho) = e^{-i\rho}\sum_{k=0}^{\infty}
  \frac{(L+1-i\eta)_{k}}{(2L+2)_{k}}\frac{(2i\rho)^{k}}{k!}\,.
  \]
  After obvious cancellations and equating coefficients at the same
  powers of $\rho$ on the both sides one arrives at the identity
  \[
  \frac{2[(L+1)^{2}+\eta^{2}]}{(L+1)(L+1-i\eta)}
  \frac{\Gamma(L+2-i\eta+k)}{\Gamma(2L+4+k)\, k!}
  = \sum_{l=0}^{k}\frac{\Gamma(L+1-i\eta+k-l)(2i)^{-l}}
  {\Gamma(2L+2+k-l)(k-l)!}\,\zeta_{L}(l+2),
  \]
  which holds for any $k\in\mathbb{Z}_{+}$, $L>-1$ and
  $\eta\in\mathbb{R}$.
\end{remark}

\begin{remark}
  The orthogonality measure $\mbox{d}\mu$ for the sequence of OPs
  $\{P_{n}^{(L)}(\eta,\rho)\}$, as described in
  Proposition~\ref{prop:OGrel_Coulomb}, fulfills
  \[
  \int_{\mathbb{R}}f(x)\,\mbox{d}\mu(x)
  = \frac{(2L+3)(L+1)^{2}}{(L+1)^{2}+\eta^{2}}
  \sum_{k=1}^{\infty}\rho_{L,k}^{\,-2}\, f(\rho_{L,k}^{\,-1})
  \]
  for every $f\in C(\mathbb{R})$. Consequently, the moment sequence
  associated with the measure $\mbox{d}\mu$ can be expressed in terms
  of the zeta function,
  \[
  m_{n} := \int_{\mathbb{R}}x^{n}\,\mbox{d}\mu(x)
  = \frac{\zeta_{L}(n+2)}{\zeta_{L}(2)}\,,\ \ n\in\mathbb{Z}_{+}
  \]
  (recall also (\ref{eq:zeta_F_2})). In view of formulas
  (\ref{eq:zeta_F_2}) and (\ref{eq:zeta_F_genrecur}), this means that
  the moment sequence can be evaluated recursively.
\end{remark}

\begin{remark}
  This comment extends Remark~\ref{rem:OPs_der_FL}. We note that it is
  possible to derive formulas analogous to (\ref{eq:zeta_F_genrecur})
  for the spectral zeta function associated with the function
  $\partial_{\rho}F_{L}(\eta,\rho)$ though the resulting recurrence
  rule is notably more complicated in this case.  One may begin,
  similarly to (\ref{eq:Coulomb_prod}), with the identities
  \begin{eqnarray*}
    \det\!{}_{2}(1-\rho\tilde{J}_{L})
    & = & \prod_{n=1}^{\infty}\left(1-\frac{\rho}
      {\tilde{\rho}_{L,n}}\right)e^{\rho/\tilde{\rho}_{L,n}}\\
    & = & \exp\!\left(\!-\frac{(L+2)\eta\rho}{(L+1)^{2}}\right)\!
    \left(\phi_{L}(\eta,\rho)+\frac{\rho}{L+1}\,
      \partial_{\rho}\phi_{L}(\eta,\rho)\right)\!.
  \end{eqnarray*}
  Hence for
  $\psi_{L}(\eta,\rho):=\phi_{L}(\eta,\rho)+(\rho/(L+1))\,\partial_{\rho}\phi_{L}(\eta,\rho)$
  we have
  \begin{equation}
    \ln\psi_{L}(\eta,\rho) = \frac{(L+2)\eta\rho}{(L+1)^{2}}
    -\sum_{n=1}^{\infty}\sum_{k=2}^{\infty}\frac{1}{k}
    \left(\frac{\rho}{\tilde{\rho}_{L,n}}\right)^{\! k}
    \label{eq:ln_psi_L}
  \end{equation}
  whenever $\rho\in\mathbb{C}$, $|\rho|<|\tilde{\rho}_{L,1}|$. Let us
  define
  \[
  \tilde{\zeta}_{L}(k) := \sum_{n=1}^{\infty}
  \frac{1}{\tilde{\rho}_{L,n}^{\, k}}\,,\ \ k\geq2.
  \]
  Now one can apply manipulations quite similar to those used in case
  of the zeta function associated with $F_{L}(\eta,\rho)$.
  Differentiating equation (\ref{eq:ln_psi_L}) twice and always taking
  into account that $F_{L}(\eta,\rho)$ solves (\ref{eq:ODR_Coulomb})
  one arrives, after some tedious but straightforward computation, at
  the equation
  \begin{eqnarray*}
    &  & \frac{2(\rho-\eta)}{\rho^{2}-2\eta\rho-L(L+1)}
    \left[-L\rho-\frac{(L+2)\eta\rho^{2}}{(L+1)^{2}}
      +\sum_{k=2}^{\infty}\tilde{\zeta}_{L}(k)\rho^{k+1}\right]\\
    &  & +\left[-L-\frac{(L+2)\eta\rho}{(L+1)^{2}}
      +\sum_{k=2}^{\infty}\tilde{\zeta}_{L}(k)\rho^{k}\right]^{2}\\
    &  & =\, L^{2}+\frac{2(L^{2}+L-1)\eta\rho}{(L+1)^{2}}
    -\rho^{2}+\sum_{k=2}^{\infty}(k+1)\tilde{\zeta}_{L}(k)\rho^{k}.
  \end{eqnarray*}
  From here the sought recurrence rules can be extracted in a routine
  way but we avoid writing them down explicitly because of their
  length and complexity.
\end{remark}

\section*{Acknowledgments}

The authors wish to acknowledge gratefully partial support from grant
No. GA13-11058S of the Czech Science Foundation.


\begin{thebibliography}{10}
\bibitem{AbramowitzStegun} M.~Abramowitz, I.~A.~Stegun: \emph{Handbook
of Mathematical Functions with Formulas, Graphs, and Mathematical
Tables}, (Dover Publications, New York, 1972).

\bibitem{Akhiezer} N.~I.~Akhiezer: \emph{The Classical Moment Problem
and Some Related Questions in Analysis}, (Oliver~\&~Boyd, Edinburgh,
1965).

\bibitem{Beckerman} B.~Beckerman: \emph{Complex Jacobi matrices},
J.~Comput. Appl. Math. \textbf{127} (2001) 17-65.

\bibitem{Chihara} T.~S.~Chihara: \emph{An Introduction to Orthogonal
Polynomials}, (Gordon and Breach, Science Publishers, Inc., New York,
1978).

\bibitem{Dickinson54} D.~Dickinson: \emph{On Lommel and Bessel polynomials},
Proc. Amer. Math. Soc. \textbf{5} (1954) 946-956.

\bibitem{Dickinson58} D.~Dickinson: \emph{On certain polynomials
associated with orthogonal polynomials}, Boll. Un. Mat. Ital. (3)
\textbf{13} (1958) 116-124.

\bibitem{Dickinson_etal} D.~Dickinson, H.~O.~Pollak, G.~H.~Wannier:
\emph{On a class of polynomials orthogonal over a denumerable set},
Pacific J. Math. \textbf{6} (1956) 239-247.

\bibitem{Erdelyi_etal_II} A.~Erd\'elyi, W. Magnus, F. Oberhettinger,
F. G. Tricomi: \emph{Higher Transcendental Functions}, Vol. II, (McGraw-Hill,
New York, 1953).

\bibitem{GardZakrajsek} J.~Gard, E.~Zakraj\v{s}ek: \emph{Method
for evaluation of zeros of Bessel functions}, J.~Inst. Math. Appl.
\textbf{11} (1973) 57-72.

\bibitem{Ikebe} Y.~Ikebe: \emph{The Zeros of Regular Coulomb Wave
Functions and of Their Derivatives}, Math. Comp. \textbf{29} (1975)
878-887.

\bibitem{Ismail} M.~E.~H.~Ismail: \emph{The zeros of basic Bessel
functions, the functions $J_{\nu+ax}(x)$, and associated orthogonal
polynomials}, J. Math. Anal. Appl. \textbf{86} (1982) 1-19.

\bibitem{JanasNaboko} J.~Janas, S.~Naboko: \emph{Multithreshold
spectral phase transitions for a class of Jacobi matrices}, Operator
Theory: Adv. and Appl. (2001), Vol. 124.

\bibitem{Kishore} N.~Kishore: \emph{The Rayleigh function}, Proc.
Amer. Math. Soc. \textbf{14} (1963) 527-533.

\bibitem{Koekoek_etal} R.~Koekoek, R. F.~Swarttouw: \emph{The Askey-scheme
of hypergeometric orthogonal polynomials and its q-analogue}, arXiv:math/9602214.

\bibitem{Koelink99} H~T.~Koelink: \emph{Some Basic Lommel
    Polynomials}, 
  J.~Approx. Theory \textbf{96} (1999) 345–365.

\bibitem{EKoelink} E.~Koelink: \emph{Spectral theory and special
functions}, in ``Laredo Lectures on Orthogonal Polynomials and Special
Functions'', eds. R.~\'Alvarez-Nodarse, F.~Marcell\'an, W.~Van
Assche, (Nova Science Publishers, New York, 2004) pp. 45-84.

\bibitem{KoelinkSwarttouw} H.~T.~Koelink, R.~F.~Swarttouw: \emph{On
    the zeros of the Hahn-Exton q-Bessel function and associated
    q-Lommel Polynomials}, 
  J.~Math. Anal. Appl. \textbf{186} (1994) 690–710.

\bibitem{Koelink_etal} H.~T.~Koelink, W.~Van~Assche: \emph{Orthogonal
polynomials and Laurent polynomials related to the Hahn-Exton q-Bessel
function}, Constr. Approx. \textbf{11} (1995) 477-512.

\bibitem{Maki} D.~Maki: \emph{On constructing distribution functions
with application to Lommel polynomials and Bessel functions}, Trans.
Amer. Math. Soc. \textbf{130} (1968), 281-297.

\bibitem{Simon} B.~Simon: \textit{Trace Ideals and Their Applications},
second ed., Mathematical Surveys and Monographs, vol. 120, (AMS, Rhode
Island, 2005).

\bibitem{StampachStovicek11} F.~\v{S}tampach, P.~\v{S}\v{t}ov\'\i\v{c}ek:
\emph{On the eigenvalue problem for a particular class of finite Jacobi
matrices}, Linear Algebra Appl. \textbf{434} (2011) 1336-1353.

\bibitem{StampachStovicek13a} F.~\v{S}tampach, P.~\v{S}\v{t}ov\'\i\v{c}ek:
\emph{The characteristic function for Jacobi matrices with applications},
Linear Algebra Appl. \textbf{438} (2013) 4130-4155.

\bibitem{StampachStovicek13b} F.~\v{S}tampach, P.~\v{S}\v{t}ov\'\i\v{c}ek:
\emph{Special functions and spectrum of Jacobi matrices}, Linear Algebra
Appl. (2013) (in press), http://dx.doi.org/10.1016/j.laa.2013.06.024.

\bibitem{ReedSimon4} M.~Reed, B.~Simon: \emph{Methods of modern
mathematical physics IV}, (Academic press, New York, 1978).

\bibitem{Watson} G.~N.~Watson: \emph{A treatise on the theory of
Bessel functions}, Second Edition, (Cambridge University Press, Cambridge,
1944).\end{thebibliography}
\end{document}